\documentclass[11pt,a4paper,reqno]{amsart}
\bibliographystyle{apalike}
\usepackage{graphicx}
\usepackage[linktocpage=true,colorlinks,citecolor=blue,linkcolor=blue,urlcolor=blue]{hyperref}
\usepackage{amssymb}
\usepackage{cite}
\usepackage{amsmath}
\usepackage{latexsym}
\usepackage{amscd}
\usepackage{amsthm}
\usepackage{mathrsfs}
\usepackage{url}
\usepackage[utf8]{inputenc}
\usepackage[english]{babel}
\usepackage{amsfonts}

\vfuzz2pt 
\hfuzz2pt 

\numberwithin{equation}{section}
\setcounter{section}{0}

\def\R{\mathbb R}
\def\Z{\mathbb Z}

\def\d{\mathrm d}
\def\e{\mathrm e}

\def\CM{\mathcal M}
\def\FJ{\mathfrak J}
\def\FG{\mathfrak G}

\def\ee{\varepsilon}
\def\wt{\widetilde}

\def\major{\mathfrak M}
\def\minor{\mathfrak m}

\newtheorem{theorem}{Theorem}[section]
\newtheorem{lemma}[theorem]{Lemma}
\newtheorem{proposition}[theorem]{Proposition}

\theoremstyle{remark}

\theoremstyle{definition}

\theoremstyle{remark}

\numberwithin{equation}{section}

\begin{document}
\title[Primes in short intervals and polynomial phases]{Discorrelation between primes in short intervals and polynomial phases}

\author{Kaisa Matom\"aki}
\address{Department of Mathematics and Statistics, University of Turku, 20014 Turku, Finland}
\email{ksmato@utu.fi}
\thanks{K.M.~was supported by the Academy of Finland grant no. 285894.}

\author{Xuancheng Shao}
\address{Department of Mathematics, University of Kentucky, Lexington, KY, 40506, USA}
\email{xuancheng.shao@uky.edu}
\thanks{X.S.~was supported by the NSF grant DMS-1802224.}

\begin{abstract}
Let $H = N^{\theta}, \theta > 2/3$ and $k \geq 1$. We obtain estimates for the following exponential sum over primes in short intervals:
\[ \sum_{N < n \leq N+H} \Lambda(n) \e(g(n)), \]
where $g$ is a polynomial of degree $k$.  As a consequence of this in the special case $g(n) = \alpha n^k$, we deduce a short interval version of the Waring-Goldbach problem. 
\end{abstract}

\maketitle

\section{Introduction}
Let $N \geq 2$ be a positive integer, and let $H = N^{\theta}$ for some $0 < \theta \leq 1$. The purpose of this paper is to obtain estimates for the sum
\begin{equation}\label{eq:sum} 
\sum_{N < n \leq N+H} \Lambda(n) \psi(n), 
\end{equation}
where $\Lambda$ is the von Mangoldt function, and $\psi$ is a polynomial phase of the form $\psi(n) = \e(g(n))$ (with the notation $\e(x) = \exp(2\pi ix)$) for some polynomial $g$. We would like to obtain results with $\theta$ as small as possible.

In the case of summing over a long interval (i.e. $\theta = 1$), the task of estimating~\eqref{eq:sum} is well understood. When $\deg g = 0$, asymptotic formula for~\eqref{eq:sum} is given by the Prime Number Theorem. When $\deg g = 1$,  estimates for the exponential sum
\[ \sum_{N < n \leq 2N} \Lambda(n) \e(\alpha n) \]
for $\alpha \in \R$ were obtained and used by Vinogradov to solve the ternary Goldbach problem. More generally, for $\psi$ a fixed nilsequence (which includes polynomial phases as special examples), Green and Tao~\cite{GT12mobius} showed the discorrelation estimate
\[ \sum_{N < n \leq 2N} \mu(n) \psi(n) \ll_A \frac{N}{(\log N)^A}, \]
for  any $A \geq 2$. This leads to a discorrelation estimate for~\eqref{eq:sum}, when $\psi$ is in ``minor arc" or when $\Lambda$ is ``W-tricked" (see~\cite[Proposition 10.2]{GT10}).

In the case of summing over a short interval, the case $\deg g = 0$ corresponds to the classical problem of counting primes in short intervals. Huxley's zero density estimate~\cite{Hux72} implies an asymptotic formula for primes in short intervals when $\theta > 7/12$ (see the discussion in~\cite[Chapter 10]{IK04}). When $\deg g = 1$, \eqref{eq:sum} becomes the exponential sum estimate
\[ \sum_{N < n \leq N+H} \Lambda(n) \e(\alpha n). \]
This has been studied quite extensively due to its implication on Vinogradov's theorem with almost equal summands. The best threshold for $\theta$ in this problem is $\theta > 5/8$ due to Zhan~\cite{Zha91}. In the more general case when $g(n) = \alpha n^k$ is a monomial of degree $k$, Huang~\cite[Theorem 2]{Hua16} obtained estimates for~\eqref{eq:sum} when $\theta > 19/24$, and this range is relaxed to $\theta > 3/4$ if $\alpha$ lies in ``minor arcs" (see~\cite[Theorem 1]{Hua16}). When dealing with $\mu$ instead of $\Lambda$, Huang~\cite{Hua15} obtained the estimate
\[ \sum_{N < n \leq N+H} \mu(n) \e(\alpha n^k) \ll_{A} \frac{H}{(\log N)^A} \]
for any $\alpha \in [0,1]$ and $A \geq 2$, in the region $\theta > 3/4$. In this paper we extend the range to $\theta > 2/3$.

\begin{theorem}\label{thm:exp-alpha}
Let $H = N^{\theta}$ for some fixed $\theta > 2/3$. Let $\alpha \in \R$ and let $k$ be a positive integer. Suppose that
\[ \left|\sum_{N < n \leq N +H} \Lambda(n) \e(\alpha n^k) \right|  \geq \frac{H}{(\log N)^A} \]
for some $A \geq 2$. Then there exists a positive integer $q \leq (\log N)^{O_k(A)}$ such that
\[ \|q\alpha\| \leq \frac{(\log N)^{O_k(A)}}{N^{k-1}H}. \]
\end{theorem}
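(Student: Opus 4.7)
The plan is to follow the Vaughan--Heath-Brown strategy for exponential sums over primes, adapted to the short-interval setting. First, I would apply Heath-Brown's identity of level $k$ to decompose the restriction of $\Lambda$ to $(N, N+H]$ into $O_k((\log N)^{O_k(1)})$ bilinear forms of the shape
\[
\sum_{M < m \leq 2M} a_m \sum_{\substack{N_1 < n \leq 2N_1 \\ N < mn \leq N+H}} b_n\, e(\alpha (mn)^k),
\]
with $MN_1 \asymp N$ and $|a_m|,|b_n| \leq (\log N)^{O_k(1)}$. After dyadic decomposition, each piece falls into a \emph{Type I} form (with $b_n$ smooth and $M$ below a suitable cutoff) or a \emph{Type II} form (with both $M$ and $N_1$ in a medium range).

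For the Type I pieces, pulling $m$ outside gives an inner short-interval Weyl sum $\sum_n e(\alpha m^k n^k)$ of length $\asymp H/m$. A short-interval version of Weyl's inequality---obtainable via van der Corput $A$-differencing $k-1$ times, or from the Vinogradov mean value theorem---shows that this inner sum is negligible unless $\alpha m^k$ is very close to a rational with small denominator. If the total Type I contribution exceeds $H/(\log N)^A$, then picking two coprime values of $m$ for which such an approximation holds and applying a $\gcd$ argument yields the desired rational approximation of $\alpha$.

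The Type II pieces are the principal obstacle. Applying Cauchy--Schwarz in the outer $m$-variable bounds the square of a Type II sum by
\[
M \sum_{N_1 < n_1, n_2 \leq 2N_1} b_{n_1}\overline{b_{n_2}} \sum_{\substack{M < m \leq 2M \\ N < mn_i \leq N+H}} e\bigl(\alpha m^k (n_1^k - n_2^k)\bigr).
\]
Writing $n_2 = n_1 + h$ restricts $m$ to an interval of length $\asymp H/N_1$, and the inner sum becomes a short-interval Weyl sum in $m$ with leading coefficient $-k\alpha h n_1^{k-1} + O_k(h^2 N_1^{k-2})$. A second application of the short-interval Weyl inequality in $m$ gives, for most contributing $(n_1, h)$, a rational approximation to $k\alpha h n_1^{k-1}$; a further $\gcd$ argument in $n_1$ and $h$ then recovers an approximation to $\alpha$ of the required quality.

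The pinch point is that the inner $m$-sum in the Type II case has length $\asymp H/N_1 \geq N^{\theta - 2/3}$, a positive power of $N$ only because $\theta > 2/3$. This is precisely where the hypothesis on $\theta$ enters, dictating the allowable Type II range of $(M, N_1)$ in the Heath-Brown decomposition. The remaining task is to track the logarithmic losses through the Heath-Brown identity, Cauchy--Schwarz, and the two successive short-interval Weyl estimates, verifying that they accumulate to the permitted exponent $O_k(A)$ in the final bounds on $q$ and $\|q\alpha\|$.
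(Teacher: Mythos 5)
Your Heath-Brown/Type-I/Type-II skeleton matches the paper's, and the Type-I treatment is essentially on the right track (though a $\gcd$ argument with two coprime $m$'s is too lossy; the paper instead uses a quantitative equidistribution lemma of Green--Tao~\cite[Lemma 4.5]{GT12}, and it also tracks \emph{all} $k$ coefficients of the expansion of $\alpha n^k$ about $N$, not just the leading one, because these must be bootstrapped at the end to reach $\|q\alpha\| \ll 1/(N^{k-1}H)$ rather than the weaker $1/H^k$). The genuine gap is in your Type-II step, and it cannot be repaired by bookkeeping.

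After Cauchy--Schwarz and short-interval Weyl, the most one can extract is Diophantine information about the \emph{difference} $g(\ell m) - g(\ell m')$, i.e.\ about the combinations $j\alpha_j + (j+1)N\alpha_{j+1}$ (where $g(n)=\sum_j \alpha_j (n-N)^j$), to precision roughly $\delta^{-O(1)} N/H^{j+1}$; tracing this through your $\gcd$ step gives $\|q\alpha\| \ll N/H^{k+1}$, off by a full power $(N/H)^k$ from the required $1/(N^{k-1}H)$. This is a structural obstruction, not a logarithmic loss: if $\e(g(n)) \approx n^{it}$ for some $|t| \lesssim (N/H)^{k+1}$, the Type-II bilinear sum really can be of size $H$ (take $a_\ell \approx \ell^{-it}$, $b_m \approx m^{-it}$), since $n^{it}$ is completely multiplicative and hence invisible to bilinear Cauchy--Schwarz followed by Weyl. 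The paper's Proposition~\ref{prop:typeII} proves exactly this weaker bound and remarks explicitly that it is best possible for general Type-II sums. Closing the gap requires using the arithmetic of the actual coefficients (ultimately coming from $\Lambda$ or $\mu$): the paper shows that on suitable subprogressions the phase is indistinguishable from $n^{it}$, and then invokes a mean-value/zero-density estimate for Dirichlet polynomials (Lemma~\ref{le:BHP}, adapted from Baker--Harman--Pintz~\cite{BHP01}, together with the zero-free region for $L(s,\chi)$ via~\cite[Lemma 2.7]{MRT19}) to exclude all $|t|$ in the range $2 (n_0/H_0)(\log N)^{A+B+57} < |t| \lesssim (N/H)^{k+1}$. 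This second, analytic ingredient is absent from your proposal, and it is also where the threshold $\theta > 2/3$ bites most sharply (through the factorization conditions~\eqref{eq:BHP-condition} of the BHP lemma), not merely in ensuring the inner $m$-sum has positive length.
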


Note that if $q \approx 1$ and $\|q\alpha\| \approx 1/(N^{k-1}H)$, then the phase $\alpha n^k$ is almost constant on $(N, N+H]$ after dividing it into residue classes modulo $q$. This major arc case will thus correspond to the classical prime number theorem in short intervals (in residue classes modulo $q$).

Via the circle method, Theorem~\ref{thm:exp-alpha} leads to a short interval version of the Waring-Goldbach problem. For a prime $p$ and a positive integer $k$, let $\tau = \tau(k,p)$ be the largest integer such that $p^{\tau} \mid k$. Define
\[ \gamma(k,p) = \begin{cases} \tau + 2 & \text{if }p=2\text{ and }\tau > 0, \\ \tau+1 & \text{otherwise.} \end{cases} \]
Define $R(k) = \prod p^{\gamma(k,p)}$, where the product is taken over all primes $p$ with $(p-1) \mid k$.

\begin{theorem}\label{thm:waring-goldbach}
Fix $k \geq 2$, $s \geq k(k+1)+3$, and $\theta > 2/3$. Then every sufficiently large positive integer $N \equiv s\pmod{R(k)}$ can be written as
\[ N = p_1^k + \ldots + p_s^k, \]
where $p_1,\cdots, p_s$ are primes satisfying $|p_i - (N/s)^{1/k}| \leq N^{\theta/k}$.
\end{theorem}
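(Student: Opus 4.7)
The plan is to deduce Theorem~\ref{thm:waring-goldbach} from Theorem~\ref{thm:exp-alpha} via the Hardy--Littlewood circle method. Setting $M = (N/s)^{1/k}$ and $H' = N^{\theta/k}$, the target primes lie in a window of length $H' \asymp M^{\theta}$ around $M$, so Theorem~\ref{thm:exp-alpha} applies with $(N,H)$ replaced by $(M, H')$ and the hypothesis $\theta > 2/3$ is preserved. I would set
\[
F(\alpha) = \sum_{M < n \leq M+H'} \Lambda(n) \e(\alpha n^k)
\]
and study the weighted representation count $R(N) = \int_0^1 F(\alpha)^s \e(-\alpha N)\, d\alpha$. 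Since the contribution from prime powers $p^j$ with $j \geq 2$ is $O(H'^{s-k}/\log N)$, a lower bound $R(N) \gg H'^{s-k}$ will suffice.

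Next I would partition $[0,1]$ into major arcs $\major$ around rationals $a/q$ with $q \leq Q = (\log N)^B$ and $|\alpha - a/q| \leq Q/(M^{k-1} H')$, together with the complementary minor arcs $\minor$. On $\minor$, Theorem~\ref{thm:exp-alpha} delivers $|F(\alpha)| \ll H'/(\log N)^A$ provided $B$ is chosen large enough in terms of $A$ and $k$. To convert this pointwise bound into an $L^s$ bound I would combine it with a short-interval Hua-type mean value estimate of the shape
\[
\int_0^1 |F(\alpha)|^{k(k+1)}\, d\alpha \ll H'^{\, k(k+1)-k+o(1)},
\]
which is where the hypothesis $s \geq k(k+1)+3$ enters. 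The two inputs together yield
\[
\int_\minor |F(\alpha)|^s \, d\alpha \ll \frac{H'^{\, s-k}}{(\log N)^A},
\]
which is negligible against the expected main term.

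On the major arcs I would use the standard approximation $F(\alpha) \approx q^{-1} S(q,a)\, V(\alpha - a/q)$, where $S(q,a) = \sum_{b \pmod q} \e(ab^k/q)$ is the usual $k$-th power Gauss sum and $V(\beta) = \sum_{M < n \leq M+H'} \e(\beta n^k)$; the error is controlled by the prime number theorem in short intervals of arithmetic progressions with moduli up to $Q$, which is well within the range covered by Huxley's zero-density estimate and Siegel--Walfisz since $\theta > 2/3 > 7/12$. Integrating over $\major$ then yields $\mathfrak{S}(N)\mathfrak{J}(N) + o(H'^{\, s-k})$, where $\mathfrak{J}(N) \asymp H'^{\, s-k}$ is the singular integral and $\mathfrak{S}(N)$ is the classical Waring--Goldbach singular series, which is $\gg 1$ precisely under the congruence $N \equiv s \pmod{R(k)}$ by the usual local solvability computation.

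The main obstacle I expect is not Theorem~\ref{thm:exp-alpha} itself but securing the short-interval $L^{k(k+1)}$ moment bound above with the right savings: the long-interval analogue is Hua's inequality or the Vinogradov main conjecture, but in a window of length $H' = M^{\theta}$ with $\theta$ only just above $2/3$ one must work harder, e.g.\ by adapting efficient congruencing arguments or by iterating Weyl--van der Corput differencing in the spirit of the proof of Theorem~\ref{thm:exp-alpha}. Once this mean-value input is in hand, the major-arc pruning and the verification of $\mathfrak{S}(N) \gg 1$ follow standard lines.
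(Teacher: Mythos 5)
Your overall circle-method framework matches the paper's, but there are genuine gaps, and two of them are fatal as stated.

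First, the size of the main term is wrong. You write that a lower bound $R(N) \gg H'^{\,s-k}$ would suffice, but the correct order of magnitude of the singular-integral contribution is $\asymp (H')^{s-1}/M^{k-1}$, and since $H' = M^{\theta}$ with $\theta < 1$ this is \emph{smaller} than $H'^{\,s-k} = (H')^{s-1}/(H')^{k-1}$ by a positive power of $M$. Consequently your claimed minor-arc bound $\int_{\minor}|F|^s\, d\alpha \ll H'^{\,s-k}/(\log N)^A$ is not $o$ of the main term, so even if every estimate you assert were available, the argument would not close. The moment estimate you really need has the shape $\int_0^1 |F(\alpha)|^{2t}\, d\alpha \ll (\log X)^{O(1)} H'^{\,2t-1}/M^{k-1}$, with the factor $M^{k-1}$ in the denominator rather than a power of $H'$: this encodes the density of $k$-th powers of integers near $M^k$, not the length of the window.

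Second, you flag the short-interval mean value as the ``main obstacle'' and speculate about proving it via efficient congruencing or iterated Weyl--van der Corput; and you allow an $H'^{\,o(1)}$ loss in the exponent. Both points are off. The $o(1)$ loss would be a small power of $N$, which swamps the only power-of-logarithm saving that Theorem~\ref{thm:exp-alpha} provides on the minor arcs; the paper explicitly needs Vinogradov's mean value theorem \emph{without} the $\varepsilon$-loss. The paper's route avoids any new short-interval machinery: an argument of Daemen (see Lemma~3.1 of Wei--Wooley, cited in the paper) shows
\[
\int_0^1 \Bigl|\sum_{|n-X|\le H}\e(\alpha n^k)\Bigr|^{2t}\,d\alpha \ll \frac{H^{k(k+1)/2-1}}{X^{k-1}}\, J_{t,k}(H),
\]
reducing the short-interval moment to the \emph{classical} Vinogradov mean value $J_{t,k}(H)$ over $[1,H]$. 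Then the resolved main conjecture of Bourgain--Demeter--Guth gives $J_{t,k}(H)\ll H^{2t-k(k+1)/2}$ without any $\varepsilon$-loss, provided $t > k(k+1)/2$; the paper takes $t = k(k+1)/2+1$, so $2t = k(k+1)+2$, forcing $s \ge 2t+1 = k(k+1)+3$. Your exponent $k(k+1)$ instead of $k(k+1)+2$ would only justify $s\ge k(k+1)+1$, which is inconsistent with the theorem's hypothesis and with the exponent range in which the Bourgain--Demeter--Guth bound is sharp.

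The major-arc analysis and the invocation of Huxley's zero-density estimate are fine and match the paper. But the core of the deduction from Theorem~\ref{thm:exp-alpha} is the Daemen reduction to $J_{t,k}(H)$ plus the $\varepsilon$-free Vinogradov bound; without naming that mechanism, the proposal leaves the decisive step unproved and replaces it with speculation that both misidentifies the tool and underestimates how damaging an $\varepsilon$-loss would be.
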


This was proved for $\theta > 19/24$ and $s \geq \max(7, 2k(k-1)+1)$ by Huang~\cite{Hua15}. We refer the reader to~\cite{Hua15} for the historical development of this problem. The improvement on the threshold for $\theta$ comes from Theorem~\ref{thm:exp-alpha}, whereas the improvement on the number of variables $s$ is due to the recent resolution of the main conjecture in Vinogradov's mean value theorem~\cite{BDG16} (which was unavailable to previous authors). Indeed, given Vinogradov's mean value conjecture, Huang's result would require $\theta > 19/24$ and $s \geq k(k+1)+1$. Unfortunately Theorem~\ref{thm:waring-goldbach} is worse in the $s$ respect. Simultaneously to our work, Salmensuu~\cite{Salmensuu19} has applied the transference principle (building on~\cite{MMS17}) to obtain Theorem~\ref{thm:waring-goldbach} for significantly shorter intervals but his work does not provide new information about the exponential sum~\eqref{eq:sum}.

Our original motivation for studying~\eqref{eq:sum} is, in fact, to obtain the short interval version of the aforementioned Green-Tao theorem on discorrelation between primes and nilsequences. We are unable to get any results for general nilsequences $\psi$, but the following theorem deals with the case $\psi(n) = \e(g(n))$ for a general polynomial $g$.

\begin{theorem}\label{thm:exp-g}
Let $k$ be a positive integer. Let $H = N^{\theta}$ for some fixed $\theta > 2/3$. Let $g$ be a polynomial of degree $k$ of the form
\[ g(n) = \sum_{j=1}^k \alpha_j(n-N)^j \]
for some $\alpha_1,\cdots,\alpha_k \in \R$. Suppose that
\[ \left|\sum_{N < n \leq N +H} \Lambda(n) \e(g(n))\right| \geq \frac{H}{(\log N)^A} \]
for some $A \geq 2$. Then there exists a positive integer $q \leq (\log N)^{O_k(A)}$ such that
\[ \|q \alpha_j\| \leq \frac{(\log N)^{O_k(A)}}{H^j} \]
for all $1 \leq j \leq k$.
\end{theorem}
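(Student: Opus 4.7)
The plan is induction on $k$. For $k=1$, since
\[ \sum_{N<n\leq N+H} \Lambda(n)\e(\alpha_1(n-N)) = \e(-\alpha_1 N)\sum_{N<n\leq N+H}\Lambda(n)\e(\alpha_1 n), \]
Theorem \ref{thm:exp-alpha} with $k=1$ gives $q\leq(\log N)^{O(A)}$ with $\|q\alpha_1\|\leq (\log N)^{O(A)}/H$. For the inductive step, I assume the result for all degrees less than $k$ and consider $g(n)=\sum_{j=1}^k \alpha_j(n-N)^j$. I would first handle the leading coefficient $\alpha_k$ using essentially the proof of Theorem \ref{thm:exp-alpha}, and then descend by one degree and apply the inductive hypothesis.

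For the leading coefficient, the key observation is that the proof of Theorem \ref{thm:exp-alpha}---via Heath-Brown's identity, Weyl-type bounds for Type I sums, and the Bourgain-Demeter-Guth Vinogradov mean value theorem for Type II sums---depends on the polynomial phase only through its leading coefficient. Indeed, Weyl differencing $k$ times extracts the constant $k!\alpha_k$, and the Vinogradov mean value is a pure moment that is insensitive to lower-order coefficients. Consequently the argument applied with $g(n)$ in place of the monomial $\alpha n^k$ produces $q_k\leq (\log N)^{O_k(A)}$ with
\[ \|q_k\alpha_k\|\leq \frac{(\log N)^{O_k(A)}}{N^{k-1}H} \leq \frac{(\log N)^{O_k(A)}}{H^k}. \]

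To descend, write $\alpha_k=a_k/q_k+\beta_k$ with $|\beta_k|\leq (\log N)^{O_k(A)}/(q_k H^k)$ and split the sum into residue classes modulo $q_k$. On each class $n\equiv r\pmod{q_k}$ the factor $\e((a_k/q_k)(n-N)^k)$ is constant, while the smooth factor $\e(\beta_k(n-N)^k)$, whose derivative in $n$ is of size $O((\log N)^{O_k(A)}/H)$, can be removed by partial summation at a polylog cost. Pigeonholing over residue classes and the subintervals produced by partial summation yields $r$ and an interval $(N',N'+H']\subseteq (N,N+H]$ with $H'\geq H/(\log N)^{O_k(A)}$ on which
\[ \Bigl|\sum_{\substack{N'<n\leq N'+H' \\ n\equiv r\,(\mathrm{mod}\,q_k)}} \Lambda(n)\,\e(g'(n))\Bigr| \geq \frac{H}{q_k(\log N)^{O_k(A)}}, \]
where $g'(n)=\sum_{j=1}^{k-1}\alpha_j(n-N)^j$ has degree $k-1$. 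Substituting $n=r+q_k s$ converts this to a short-interval sum with a polynomial phase $\tilde g$ of degree $k-1$ in $s$. Applying a residue-class version of the inductive hypothesis and back-substituting recovers $q'\leq (\log N)^{O_k(A)}$ and the required bounds on $\alpha_1,\ldots,\alpha_{k-1}$; taking $q=\mathrm{lcm}(q_k,q')$ finishes the proof.

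The principal difficulty lies in the leading-coefficient step: verifying that the proof of Theorem \ref{thm:exp-alpha} truly transfers to general polynomial phases without loss in the polylog factors. Each component---the Heath-Brown decomposition, the Type I Weyl reduction (where after a change of summation variable the phase remains a polynomial in the new variable whose leading coefficient is a monomial in $\alpha_k$), and the Type II application of the Vinogradov mean value---must be inspected to confirm that it depends on $g$ only through $\alpha_k$. A secondary subtlety is that the descent must carry an extra modulus parameter from the residue class decomposition, which either demands a $W$-tricked reformulation of the theorem or a careful treatment of the substitution $n=r+q_k s$ to recover $\|q\alpha_j\|$ in the correct $(n-N)^j$ basis.
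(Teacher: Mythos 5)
There is a genuine gap. Your leading-coefficient step rests on the claim that the proof of Theorem~\ref{thm:exp-alpha} ``depends on the polynomial phase only through its leading coefficient,'' and this is false; it conflates the structure of the paper's argument and misattributes the tools. First, the paper does not prove Theorem~\ref{thm:exp-alpha} directly and then generalize: it proves Theorem~\ref{thm:exp-g} and deduces Theorem~\ref{thm:exp-alpha} from it by a small linear-algebra argument on the shifted coefficients $\alpha_j = \binom{k}{j}N^{k-j}\alpha$, so there is no standalone ``monomial proof'' to transfer. Second, the Bourgain--Demeter--Guth Vinogradov mean value theorem plays no role at all in the proofs of Theorems~\ref{thm:exp-alpha} or~\ref{thm:exp-g}; it only appears in Section~\ref{sec:app} for the Waring--Goldbach application. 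Third, and most importantly, the type-II analysis does not isolate $\alpha_k$. Proposition~\ref{prop:typeII} (the minor-arc type-II estimate) only yields Diophantine information on the combinations $j\alpha_j + (j+1)N\alpha_{j+1}$, and for $j=k$ this gives $\|qk\alpha_k\| \leq \delta^{-O(1)} N/H^{k+1}$, which is \emph{weaker} than the bound $\|q\alpha_k\| \leq \delta^{-O(1)}/H^k$ needed in Theorem~\ref{thm:exp-g}, let alone the bound $(\log N)^{O(A)}/(N^{k-1}H)$ you cite from Theorem~\ref{thm:exp-alpha}, since $N/H^{k+1} = (N/H)/H^k \gg 1/H^k$.

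The real content of the proof of Theorem~\ref{thm:exp-g}, which your outline omits entirely, is the major-arc step that follows the type-II minor-arc estimate. When the $\alpha_j$ satisfy the conclusion of Proposition~\ref{prop:typeII}, one shows that on a suitable subprogression $P \subset (N, N+H]$ the phase $\e(g(n))$ is well-approximated by $\eta\, n^{it}$ for some $|t| \leq (N/H)^{k+1}(\log N)^{O(A)}$; this approximation uses \emph{all} the coefficients $\alpha_1, \dots, \alpha_k$ through the chain of relations $j\beta_j + (j+1)n_0\beta_{j+1}$. One then invokes the Dirichlet-polynomial machinery of Lemma~\ref{le:BHP} (Baker--Harman--Pintz, relying on zero-density estimates and the prime number theorem in short intervals) to rule out large $|t|$, and finally unwinds the $n^{it}$ approximation to get the stated bounds on each $\|q\alpha_j\|$. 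None of this is captured by ``Weyl differencing extracts $k!\alpha_k$ and Vinogradov mean value is insensitive to lower-order terms''; in fact the point is precisely that the lower-order coefficients must be tracked through the whole argument. Your proposed inductive descent (split mod $q_k$, kill the leading term, reduce degree) might in principle be assembled into a different proof, but you cannot bootstrap it from a leading-coefficient lemma that does not exist, and you would still have to reproduce the $n^{it}$/Baker--Harman--Pintz analysis at each stage of the induction.
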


Again note that if $q \approx 1$ and $\|q\alpha_j\| \approx H^{-j}$ for all $j$, then $g(n)$ is almost constant on $(N, N+H]$ after dividing it into residue classes modulo $q$. This is once again a major arc case corresponding to the classical prime number theorem in short intervals.

Unsurprisingly, our argument leads to the following analogous result for the M\"{o}bius function.

\begin{theorem}\label{thm:exp-g-mu}
Let $k$ be a positive integer. Let $H = N^{\theta}$ for some fixed $\theta > 2/3$. Let $g$ be a polynomial of degree $k$. Then
\[ \sum_{N < n \leq N+H} \mu(n) \e(g(n)) \ll_{k,A} \frac{H}{(\log N)^A} \]
for any $A \geq 2$.
\end{theorem}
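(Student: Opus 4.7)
The plan is to mirror the proof of Theorem~\ref{thm:exp-g} with $\mu$ in place of $\Lambda$, producing a ``major-arc'' output that is then eliminated using Siegel--Walfisz-type cancellation for $\mu$ in short intervals in arithmetic progressions. By a change of variable, any polynomial of degree $k$ can be written as $g(n) = \alpha_0 + \sum_{j=1}^k \alpha_j(n-N)^j$, and the constant factor $\e(\alpha_0)$ plays no role. Suppose for contradiction that
\[ \left| \sum_{N < n \leq N+H} \mu(n)\e(g(n)) \right| \geq \frac{H}{(\log N)^A}. \]

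Applying Heath--Brown's identity (or a Vaughan-type decomposition) in place of the decomposition of $\Lambda$ used for Theorem~\ref{thm:exp-g} reduces the sum to Type I and Type II pieces of exactly the same shape; in fact $\mu$ is slightly easier, as there is no constant ``$1$'' component to split off. Re-running the short-interval minor-arc bilinear machinery then yields a positive integer $q \leq (\log N)^{O_k(A)}$ with
\[ \|q\alpha_j\| \leq \frac{(\log N)^{O_k(A)}}{H^j}, \quad 1 \leq j \leq k, \]
which is the $\mu$-analogue of Theorem~\ref{thm:exp-g}.

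To eliminate this major-arc case, I would write $q\alpha_j = a_j + \theta_j$ with $a_j \in \Z$ and $|\theta_j| \leq L H^{-j}$, where $L = (\log N)^{O_k(A)}$, and split $(N, N+H]$ into residue classes modulo $q$. For $n \equiv N+b \pmod{q}$ one has $(n-N)^j \equiv b^j \pmod{q}$, so $\e\bigl(\sum_j a_j(n-N)^j/q\bigr)$ equals a constant $c_b$ of modulus $1$ depending only on $b$. Hence the original sum becomes
\[ \e(\alpha_0)\sum_{b=0}^{q-1} c_b \sum_{\substack{N < n \leq N+H \\ n \equiv N+b \pmod q}} \mu(n)\,\e(h(n)), \]
where $h(n) = \sum_{j=1}^k (\theta_j/q)(n-N)^j$ satisfies $|h'(n)| \ll_k L/(qH)$ on $(N, N+H]$. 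Abel summation reduces each inner sum to partial sums
\[ S_b(u) = \sum_{\substack{N < n \leq u \\ n \equiv N+b \pmod q}} \mu(n); \]
a Siegel--Walfisz-type bound $|S_b(u)| \ll_{A'} H/(q(\log N)^{A'})$, available uniformly for $q \leq (\log N)^{O_k(A)}$ when $\theta > 7/12$ (via Huxley's zero-density estimate), then bounds the total by $\ll_k HL/(\log N)^{A'} \ll H/(\log N)^{A'-O_k(A)}$. Choosing $A'$ large enough in terms of $A$ and $k$ contradicts the lower bound.

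The main obstacle is verifying that the minor-arc bilinear machinery of Theorem~\ref{thm:exp-g} transfers cleanly to $\mu$: one must rerun the Type I / Type II analysis with the Heath--Brown identity while preserving uniformity in all parameters. The major-arc elimination is routine given standard short-interval cancellation for $\mu$ in arithmetic progressions.
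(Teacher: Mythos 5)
Your proposal follows essentially the same route as the paper: a Heath--Brown decomposition of $\mu$ feeds the same Type I / Type II propositions (which are stated for general bounded coefficients, so carry over verbatim), and the resulting major-arc case $\|q\alpha_j\| \leq (\log N)^{O_k(A)}/H^j$ is then dispatched by cancellation of $\mu$ in short intervals in arithmetic progressions, with the needed input coming from zero-density estimates for $L(s,\chi)$ valid since $\theta > 2/3 > 7/12$. The only cosmetic difference is in how the slowly-varying phase $h(n)$ is removed on each residue class: you use Abel summation, whereas the paper subdivides $(N, N+H]$ into intervals of length $H(\log N)^{-O_k(A)}$ so that the phase is essentially constant on each piece; both are standard and equivalent here, costing only a polylog factor that is absorbed by choosing the exponent $A'$ large enough in terms of $A$ and $k$.
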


We end the introduction by mentioning a few related results. In this paper we focus on a fixed short interval, but one can also ask the same question for almost all short intervals. For example, Huxley's zero density estimate implies that one can count primes in almost all short intervals of length $H = N^{\theta}$ with $\theta > 1/6$. In this direction, Matom\"{a}ki and Radziwi{\l}{\l}~\cite{MR16} made the breakthrough showing that
\[ \sum_{n_0 < n \leq n_0+H} \mu(n) = o(H) \]
for almost all $n_0 \sim N$, provided that $H = H(N) \rightarrow \infty$. For the degree $1$ case involving exponential sums, Matom\"{a}ki, Radziwi{\l}{\l}, and Tao~\cite{MRT18} showed that
\[ \sup_{\alpha \in \R} \left|\sum_{n_0 < n \leq n_0+H} \mu(n) \e(\alpha n)\right| = o(H) \]
for almost all $n_0 \sim N$, provided that $H = N^{\theta}$ for any fixed $\theta > 0$, and they plan to return to the higher order cases. Unfortunately these results do not apply to $\Lambda$.

The rest of this paper is organized as follows. In Section~\ref{sec:overview} we outline the general structure of our argument. We also explain there the lack of results for general nilsequences $\psi$. 
In Section~\ref{sec:typeIandII} we prove ``type-I'' and ``minor arc type-II'' exponential sum estimates and in Section~\ref{sec:combine} we prove Theorems~\ref{thm:exp-alpha},~\ref{thm:exp-g}, and~\ref{thm:exp-g-mu}. In Section~\ref{sec:app}, we deduce Theorem~\ref{thm:waring-goldbach} via the circle method. 

\subsection*{Acknowledgments}

We are grateful  for the annonymous referees for valuable suggestions.

\section{Overview of proof}\label{sec:overview}

From now on we will always write $H = N^{\theta}$ and all implied constants are allowed to depend on the degree $k$. We will use $m \sim M$ to denote the dyadic range $M < m \leq 2M$. The proof of Theorem~\ref{thm:exp-g} will begin by an application of Heath-Brown's identity which roughly reduces matters to studying type-I sums
\[
\sum_{\substack{\ell, m \\ m \sim M \\ N < \ell m \leq N + H}} b_m \e(g(\ell m))
\]
and type-II sums 
\[
\sum_{\substack{\ell, m_1, m_2 \\ m_1 \sim M_1, m_2 \sim M_2 \\ N < \ell m_1 m_2 \leq N + H}} a_{\ell} b_{m_1} c_{m_2} \e(g(\ell m_1 m_2))
\]
with $M, M_1, M_2$ on certain ranges. 

Type-I sums will be handled using the following proposition. Here $\tau_{\ell}$ denotes the $\ell$-fold divisor function.

\begin{proposition}[Type-I estimate]\label{prop:typeI}
Let $\delta \in (0,1/2)$ and assume that $1 \leq M \leq \delta^C H$ for some sufficiently large constant $C = C(k) > 0$. Let $|b_m| \leq \tau_5(m)$ for each $m$.  Let
\[ g(n) = \sum_{j=1}^k \alpha_j (n-N)^j \]
be a polynomial of degree $k$. If 
\[ \left|\sum_{\substack{\ell, m \\ m \sim M \\ N < \ell m \leq N+H}} \psi(\ell) b_m \e(g(\ell m))\right| \geq \delta H (\log N)^{13} \]
for either $\psi(\ell) = 1$ or $\psi(\ell) = \log\ell$, then there exists a positive integer $q \leq \delta^{-O_k(1)}$ such that
\[ \|q \alpha_j\| \leq \delta^{-O_k(1)} \frac{1}{H^j} \]
for all $1 \leq j \leq k$.
\end{proposition}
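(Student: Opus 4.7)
The plan is to exploit the long inner sum over $\ell$ (of length $L_m = H/m \geq \delta^{-C}/2$, thanks to $M\leq \delta^C H$) via a polynomial Weyl/equidistribution estimate for each fixed $m$, and then combine the resulting rational approximations across many $m$'s via a Diophantine transfer argument. The claim on the original coefficients $\alpha_j$ will follow by a clean change of basis to the Taylor coefficients of $g$ at the midpoint of $(N,N+H]$.

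First, a routine partial summation reduces the case $\psi(\ell) = \log\ell$ to $\psi \equiv 1$. Apply Cauchy--Schwarz in $m$ together with the divisor moment bound $\sum_{m\sim M}\tau_5(m)^2 \ll M(\log N)^{O(1)}$ to extract a set $\mathcal M \subset (M, 2M]$, $|\mathcal M| \gg \delta^{O(1)} M/(\log N)^{O(1)}$, on which the inner sum $T_m := \sum_{\ell \in I_m}\e(g(\ell m))$ satisfies $|T_m|\gg \delta^{O(1)} L_m /(\log N)^{O(1)}$, with $I_m := (N/m, (N+H)/m]$. For each such $m$, Taylor-expand $P_m(\ell) := g(\ell m)$ about the midpoint $\ell_0 := (N + H/2)/m$ of $I_m$: since $\ell_0 m = c := N + H/2$, one obtains $P_m(\ell) = \sum_{i=0}^k m^i \beta_i (\ell - \ell_0)^i$, where $\beta_i := g^{(i)}(c)/i!$ depends only on $\alpha_1,\dots,\alpha_k$ and $H$, not on $m$. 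A quantitative polynomial equidistribution result (Weyl plus Vinogradov's mean value theorem, or the abelian case of the Green--Tao polynomial orbits theorem) applied to $P_m$ on $I_m$ yields $q_m \leq \delta^{-O_k(1)}$ with $\|q_m m^i \beta_i\| \leq \delta^{-O_k(1)}/L_m^i = \delta^{-O_k(1)} (m/H)^i$ for every $i$. Pigeonholing on $q_m$ provides a common $q^* \leq \delta^{-O_k(1)}$ on a subset $\mathcal M^* \subseteq \mathcal M$ of comparable size.

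The crux is then a Diophantine transfer: from the family of bounds $\|q^* \beta_i m^i\| \leq \delta^{-O_k(1)} m^i/H^i$ for $m \in \mathcal M^*$, deduce $\|Q \beta_i\| \leq \delta^{-O_k(1)}/H^i$ for some $Q \leq \delta^{-O_k(1)}$. The mechanism is to view $\{m^i : m\in \mathcal M^*\}$ as a dense set of $i$th powers lying inside a Bohr set of $q^*\beta_i$; smoothing its indicator by a Fej\'er kernel and applying Weyl's inequality to $\sum_{m\sim M}\e(\nu q^*\beta_i m^i)$ (tracking the scale $\nu$) extracts the desired rational approximation with bounded denominator. The hypothesis $M\leq \delta^C H$ with $C$ sufficiently large is essential here: it makes $(M/H)^i \leq \delta^{Ci}$ small enough to render the transfer quantitatively tight. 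Once $\|Q\beta_i\|\leq \delta^{-O_k(1)}/H^i$ holds for every $i$, the inverse Taylor expansion at $c$, whose change-of-basis coefficients are bounded by powers of $H/2$, immediately yields $\|Q \alpha_j\| \leq \delta^{-O_k(1)}/H^j$ for each $j$. The main obstacle is the Diophantine transfer step itself: the inner polynomial Weyl output loses a factor $m^i$ on the right-hand side, and recovering the missing factor $M^i$ to reach the sharp $1/H^i$ bound requires fully leveraging the set $\mathcal M^*$ together with the gap between $M$ and $H$ afforded by the type-I hypothesis.
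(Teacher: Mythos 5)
Your overall plan matches the paper's: Cauchy--Schwarz to extract a dense set $\CM$ of good $m$'s, apply a reformulated Weyl estimate to the inner sum for each fixed $m$, then use a Green--Tao--type ``$\delta N$ values of a polynomial near an interval'' lemma (the paper's Lemma~\ref{GT-Lemma4.5}, which is exactly the ``Fej\'er kernel plus Weyl'' transfer you describe) to pass from information about $m$ ranging over $\CM$ to information about the coefficients themselves. The change of expansion point from $N$ to a midpoint $c$, and the observation that the Taylor coefficients at $c$ are $m$-independent, is a perfectly good variant. So the skeleton is right.

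The genuine gap is the assertion that Weyl ``yields $\|q_m m^i\beta_i\|\leq\delta^{-O_k(1)}/L_m^i$ for every $i$.'' Weyl (more precisely Lemma~\ref{lem:weyl}) applies to a polynomial over an interval of integers beginning near $1$; to use it with the correct scale $L_m=H/m$ you must shift by an \emph{integer} $\widetilde\ell_0\approx c/m$, and what it then controls are the Taylor coefficients of $g$ at the integer point $\widetilde\ell_0 m$, namely $c_j=m^j\sum_{i\geq j}\binom{i}{j}\beta_i(b')^{i-j}$ with $b'=\widetilde\ell_0 m-c$ of size $\Theta(m)$. These $c_j$ are \emph{not} equal to $m^j\beta_j$ except at $j=k$, and since $b'/m$ is not an integer you cannot pass from $\|q\, c_j\|$ to $\|q\, m^j\beta_j\|$ by a triangular inversion (the inequality $\|nx\|\leq|n|\,\|x\|$ fails for non-integer $n$). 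The paper resolves this by a downward induction on $j$: having already established $\|q\alpha_i\|\leq\delta^{-O(1)}H^{-i}$ for $i>j$, they control $q(\beta_j-\alpha_j m^j)=q\sum_{i>j}\alpha_i\binom{i}{j}m^j b^{i-j}$ using crucially that $\binom{i}{j}m^j b^{i-j}$ is an \emph{integer} (with $b\in\Z$), then apply Lemma~\ref{GT-Lemma4.5} to isolate $\alpha_j$ itself. Your plan postpones the transfer to a single step at the end, but that step cannot be set up until the $\|q\, m^i\beta_i\|$ bounds are in hand, and producing those bounds is precisely the interleaved induction you omit. (A smaller issue: $c=N+H/2$ is in general not an integer; one needs $c\in\Z$, e.g.\ $c=N+\lfloor H/2\rfloor$, for the integrality of $m^j(b')^{i-j}$ to be available in the recursion.) If you insert the downward induction, applying the transfer lemma once per degree and enlarging $q$ by a bounded factor $O(1)$ times, the argument closes and is essentially the paper's proof in different coordinates.
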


For type-II sums we will have a two-part argument, starting with the following minor arc proposition.
\begin{proposition}[Type-II minor arc estimate]\label{prop:typeII}
Let $\delta \in (0,1/2)$, $M \geq 2$ and $L = N/M$. Assume that $H \geq \delta^{-C}\max(L,M)$ for some sufficiently large constant $C = C(k)>0$. Let $|a_{\ell}| \leq \tau_5(\ell)$ and $|b_m| \leq \tau_3(m)$ for each $\ell$ and $m$. Let
\[ g(n) = \sum_{j=1}^k \alpha_j (n-N)^j \]
be a polynomial of degree $k$. If
\[ \left|\sum_{\substack{\ell, m \\ m \sim M \\ N < \ell m \leq N+H}} a_{\ell} b_m \e(g(\ell m)) \right| \geq \delta H (\log N)^{32}, \]
then there exists a positive integer $q \leq \delta^{-O_k(1)}$ such that
\[ \|q (j \alpha_j + (j+1)N \alpha_{j+1})\| \leq \delta^{-O_k(1)} \frac{N}{H^{j+1}} \]
for all $1 \leq j \leq k$, with the convention that $\alpha_{k+1} = 0$.
\end{proposition}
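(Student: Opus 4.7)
The plan is to apply the classical bilinear Cauchy--Schwarz-plus-Weyl strategy, with the key input being the strong Weyl inequality that follows from the Bourgain--Demeter--Guth resolution~\cite{BDG16} of the Vinogradov mean value conjecture. First I apply Cauchy--Schwarz to the outer sum in whichever of $\ell, m$ has the shorter range (without loss of generality $L \le M$, so I square in the $\ell$-direction). The hypothesis $H \ge \delta^{-C}\max(L,M)$ ensures that the outer loss of $\min(L,M)$ is acceptable and that the resulting inner sum, of length $\sim H/M$, is long enough for Weyl to apply. Expanding the square, discarding the negligible diagonal contribution of size $\ll H(\log N)^{O(1)}$, and pigeonholing on the shift $h = m_1 - m_2$, I obtain a positive proportion of pairs $(m_1,h)$ for which the inner sum $\sum_{\ell} \e(g(\ell m_1) - g(\ell m_2))$ has magnitude $\gg (H/M)(\log N)^{-O(1)}$.

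For each such pair I apply the Weyl--Vinogradov inequality. The phase is a polynomial in $\ell$ of degree $k$ whose $\ell^i$-coefficient equals $c_i(m_1^i - m_2^i)$, where $c_i$ is the coefficient of $n^i$ in $g(n)$. The simultaneous form of Weyl following from the MVT yields a common denominator $q \le \delta^{-O_k(1)}$ with $\|q\, c_i(m_1^i - m_2^i)\| \le \delta^{-O_k(1)}(M/H)^i$ for every $1 \le i \le k$. The combinations $\gamma_j := j\alpha_j + (j+1)N\alpha_{j+1}$ appearing in the conclusion arise via the algebraic identity
\[
n\,g'(n) = \sum_{j=0}^{k}\gamma_j(n-N)^j,
\]
immediate from $n(n-N)^{j-1} = (n-N)^j + N(n-N)^{j-1}$. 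Indeed, substituting $n = \ell m_1 - N$ and $m_2/m_1 = 1 - h/m_1$, the leading Taylor term in $\mu := h/m_1$ of $g(\ell m_1) - g(\ell m_2)$, viewed as a polynomial in $n$, equals $\mu(n+N)g'(n+N) = \mu\sum_j\gamma_j n^j$, so the $\gamma_j$'s are the natural quantities to control.

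To extract rational approximations of the $\gamma_j$'s I apply a standard Vaughan/Heath-Brown gcd lemma in two rounds. Varying $m_1$ over $[M,2M]$ exploits that $m_1^i - m_2^i$ is a polynomial in $m_1$ of degree $i-1$ with leading coefficient $ih$, gaining a factor of $M^{i-1}$; a further variation in $h$ over $[1, H_0]$ with $H_0 = H/L$ gains another $H_0$. The product $M^{i-1}H_0 = M^{i-1}H/L$, combined with the Weyl bound $(M/H)^i$, gives (for $i = k$) the clean conclusion $\|q'\gamma_k\| = \|q'\,k\alpha_k\| \le \delta^{-O_k(1)} N/H^{k+1}$. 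For $j = k-1, \dotsc, 1$ I proceed by downward induction: the analogous extraction at level $j$ produces a rational approximation of $\gamma_j + \sum_{j' > j}\binom{j'}{j}n_0^{j'-j}\gamma_{j'}$ for a base point $n_0$ of size $O(M)$; the higher $\gamma_{j'}$ contributions are subtracted off using the already established approximations and the inequality $M/H \le \delta^C$, yielding $\|q\gamma_j\| \le \delta^{-O_k(1)} N/H^{j+1}$.

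The main obstacle is controlling the $k$-fold inductive extraction cleanly. The leading-in-$\mu$ approximation of the phase is not exact, with $O(\mu^2)$ corrections involving $\alpha_j$'s that are not a priori bounded. One must therefore work with the \emph{exact} polynomial coefficients (on which the strong Weyl gives simultaneous rational approximations with a single $q$) and carefully track how denominators and constants accumulate over the $k$ steps. The assumption $H \ge \delta^{-C}\max(L,M)$ with $C = C(k)$ sufficiently large is precisely what guarantees enough room in the variation ranges of $m_1$, $h$, and the base points for the pigeonhole and gcd arguments to close at each stage.
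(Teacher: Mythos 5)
Your high-level plan -- Cauchy--Schwarz in the shorter variable, expand, pigeonhole, Weyl on the $\ell$-sum, then two rounds of variation to extract $\gamma_j := j\alpha_j + (j+1)N\alpha_{j+1}$ -- matches the paper's strategy, and the guiding identity $n g'(n) = \sum_j \gamma_j (n-N)^j$ is a genuinely illuminating way to see why these combinations appear. But there is a concrete error in the Weyl step and a genuine gap in the extraction that you yourself flag but do not close.

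The Weyl step is stated incorrectly. The $\ell$-sum has length $\sim H/M$ but ranges over a sub-interval of $[L/2,2L]$. Lemma~\ref{lem:weyl} (the paper's Weyl reformulation, Lemma~3.1) requires the sum over $I\subset\{1,\dots,N\}$ to be $\gg \delta N$; since $H/M \ll L$, applying it with ambient range $\{1,\dots,4L\}$ forces the effective $\delta$ down to $\ll \delta H/N$, which destroys the $\delta^{-O(1)}$ bounds. One must \emph{shift} the polynomial to $\ell' \mapsto g((\ell_0+\ell')m_1)-g((\ell_0+\ell')m_2)$ with $\ell'$ ranging over $\{1,\dots,|I|\}$, and the resulting $\ell'^{\,j}$-coefficient is \emph{not} $c_j(m_1^j-m_2^j)$; it is $\beta_j(m_1,m_2)=\sum_{j\le i\le k}\alpha_i\binom{i}{j}(m_1^j b_1^{i-j}-m_2^j b_2^{i-j})$ with $b_\ell=\ell_0 m_\ell - N$. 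Converting back from $\beta_j$ to $c_j(m_1^j-m_2^j)$ by undoing the shift loses factors $\ell_0^{\,i-j}\sim L^{i-j}$, which are fatal. So your chain $\|q\,c_i(m_1^i-m_2^i)\|\le\delta^{-O(1)}(M/H)^i$ does not hold, and even if it did, the change of basis from $\{i c_i\}$ back to $\{\gamma_j\}$ loses factors of $(N/H)^{i-j}$.

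The extraction step is a genuine gap, not a detail. You write that the analogous extraction at level $j$ produces a rational approximation of ``$\gamma_j+\sum_{j'>j}\binom{j'}{j}n_0^{j'-j}\gamma_{j'}$ for a base point $n_0$ of size $O(M)$'', which would indeed allow the higher $\gamma_{j'}$ to be subtracted off using $M/H\le\delta^{C}$. But you do not derive this shape, and it is precisely the hard part. The paper's proof succeeds because the shifted coefficient is \emph{separable}: $\beta_j(m,m')=\gamma_j(m)-\gamma_j(m')$ with $\gamma_j(m)=m^j\sum_{j\le i\le k}\alpha_i\binom{i}{j}(\ell_0 m-N)^{i-j}$, so one can pigeonhole $\gamma_j(m)$ into a short arc over a window $J$ of length $\sim H/L$, apply Lemma~\ref{GT-Lemma4.5} (which is needed here in its full polynomial strength, not just a linear gcd lemma) to the linear coefficient of $\gamma_j(m_0+m)$ in $m$, and then carry out the identity $\ell_0 m_0 = h_0+N$ to rewrite that linear coefficient exactly as $m_0^{j-1}\sum_{j\le i\le k}h_0^{i-j}\binom{i}{j}\gamma_i$ with $|h_0|=O(H)$. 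Crucially the error factor is a power of $H$, not of $N$ or $L$, which is what makes the downward induction close. None of this algebra is in your proposal; you acknowledge the obstacle (``carefully track how denominators and constants accumulate over the $k$ steps'') without resolving it, and the attempt to shortcut via the leading Taylor term in $\mu=h/m_1$ is, as you note, not exact and cannot be patched by a generic bookkeeping argument.
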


Propositions~\ref{prop:typeI} and~\ref{prop:typeII} will be proven in Section~\ref{sec:typeIandII}. The Diophantine information in the conclusion of Proposition~\ref{prop:typeII} is perhaps unexpected, but from the argument in Section~\ref{sec:typeIandII} one can see that if
\[ \|j\alpha_j + (j+1)N\alpha_{j+1}\| \approx \frac{N}{H^{j+1}} \]
for all $j$, then the type-II sum could be large:
\[ \sum_{\substack{\ell, m \\ N < \ell m \leq N+H}} a_{\ell} b_m \e(g(\ell m)) \approx H \]
for certain coefficients $\{a_{\ell}\}$ and $\{b_m\}$. In other words, the conclusion in Proposition~\ref{prop:typeII} is the best one can get for general type II sums.

When the coefficients of $g$ satisfy the conclusion of Proposition~\ref{prop:typeII}, we can show that (for details see Section~\ref{sec:combine}) $\e(g(n)) \approx n^{it}$ on a long progression inside $(N, N+H]$ for certain $t$ with $|t| \leq (N/H)^{k+1}$. This means that, in order to handle the remaining case, it suffices to estimate the corresponding type II sums twisted by $n^{it}$ instead of $\e(g(n))$. Dirichlet polynomial techniques become applicable, and we shall use the following estimate which quickly follows from the work of Baker, Harman and Pintz~\cite{BHP01}.
\begin{lemma} 
\label{le:BHP}
Fix $\theta > 2/3$ and $A \geq 2$. Let $H = N^{\theta}$. Let $L, M_1 ,M_2 \geq 1$ be such that $M_j = N^{\alpha_j}$ and $L M_1 M_2 \asymp N$. Assume that $\alpha_1, \alpha_2 > 0$ obey the bounds
\begin{equation}
\label{eq:BHP-condition}
|\alpha_1 - \alpha_2| \leq \frac{1}{3} + \frac{\theta-2/3}{100} \quad \text{and} \quad 0 < |1 - \alpha_1 - \alpha_2| \leq \frac{4}{9}.
\end{equation}
Let $a_{m_1} ,b_{m_2} ,c_\ell$ be $\tau_5$-bounded coefficients. Suppose that
\begin{equation}
\label{eq:BHP-supt-cond}
\sup_{|t| \leq \frac{N}{H} (\log N)^{A+50}} \left| \sum_{\ell \sim L} \frac{c_\ell}{\ell^{1/2+it}}\right| \ll_C \frac{L^{1/2}}{(\log N)^{C}}
\end{equation}
for every $C > 0$. Then
\[
\left|\sum_{\substack{N < \ell m_1 m_2 \leq N+H \\ \ell \sim L, m_1 \sim M_1, m_2 \sim M_2}} a_{m_1} b_{m_2} c_\ell \right| \ll_A \frac{H}{(\log N)^A}.
\]
\end{lemma}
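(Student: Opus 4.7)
The plan is to express the sum via truncated Perron on the critical line and then estimate the resulting integral by Dirichlet polynomial methods, combining the hypothesis \eqref{eq:BHP-supt-cond} with the mean-value and large-value theorems developed in \cite{BHP01}.

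Writing $F_1(s) = \sum_{m_1 \sim M_1} a_{m_1} m_1^{-s}$, $F_2(s) = \sum_{m_2 \sim M_2} b_{m_2} m_2^{-s}$, and $G(s) = \sum_{\ell \sim L} c_\ell \ell^{-s}$, the truncated Perron formula on $\re s = 1/2$ at height $T = N^{10}$ gives
\[
\sum_{\substack{N < \ell m_1 m_2 \leq N+H \\ \ell \sim L,\, m_j \sim M_j}} a_{m_1} b_{m_2} c_\ell = \frac{1}{2\pi i} \int_{-T}^{T} F_1(1/2+it) F_2(1/2+it) G(1/2+it)\, K(t)\, dt + O(1),
\]
where the short-interval kernel $K(t) := ((N+H)^{1/2+it} - N^{1/2+it})/(1/2+it)$ satisfies $|K(t)| \ll \min(H N^{-1/2}, N^{1/2}/(1+|t|))$. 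Split the $t$-axis dyadically into pieces $|t| \asymp T'$, with threshold $T_0 := (N/H)(\log N)^{A+50}$.

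On the low-frequency pieces $T' \leq T_0$, combine the pointwise bound on $G$ from \eqref{eq:BHP-supt-cond} (which saves an arbitrary power of $\log N$) with BHP-style Hal\'asz--Montgomery large-value estimates for the polynomial $F_1 F_2$: this controls $\int_{|t|\asymp T'} |F_1 F_2 G|(1/2+it)\,dt$ efficiently, avoiding the polynomial loss in $N/H$ that a naive Cauchy--Schwarz against the plain mean-value theorem would incur. On the high-frequency pieces $T_0 < T' \leq T$, invoke the BHP mean-value bound on the full product $F_1 F_2 G$, which is built from the fourth moment for $\zeta(1/2+it)$, exponent-pair estimates, and Huxley's zero-density input. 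Under \eqref{eq:BHP-condition} both regimes yield a power-log saving over the trivial bound, and combining with $|K(t)| \ll \min(HN^{-1/2}, N^{1/2}/T')$ and summing over the $O(\log N)$ dyadic scales gives the desired $\ll H(\log N)^{-A}$.

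The main obstacle is verifying that the admissibility inequalities behind the BHP Dirichlet polynomial estimates match \eqref{eq:BHP-condition} exactly, and that the polylog saving from \eqref{eq:BHP-supt-cond} can be threaded into their argument without additional loss. The symmetric band $|\alpha_1 - \alpha_2| \leq 1/3 + (\theta - 2/3)/100$ should be the widest range in which BHP's type-II bound still gives a saving once the short-interval exponent is tightened to $\theta > 2/3$, while $0 < |1 - \alpha_1 - \alpha_2| \leq 4/9$ records the range in which the $L$-factor is nontrivially present and short enough to feed the fourth-moment input. Beyond this numerology, the Perron reduction and the combination of the two regimes is routine bookkeeping.
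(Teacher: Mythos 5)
Your proposal differs from the paper's proof in one structural choice that makes it noticeably more complicated: you truncate Perron at the large height $T = N^{10}$, which makes the Perron remainder trivially small but forces you to handle a high-frequency regime $|t| > T_0$ where the hypothesis \eqref{eq:BHP-supt-cond} gives no information on $G$. The paper instead truncates Perron at exactly $T = N(\log N)^{A+50}/H$, i.e.\ the top of the range in \eqref{eq:BHP-supt-cond}. At this small height the Perron remainder is not $O(1)$ but it is $O\bigl(H/(\log N)^{A+2}\bigr)$ once one invokes the Shiu/divisor bound \eqref{eq:div-bound} to control $\sum_{n}|a_n|\min(1, N/(T|N-n|))$; with $T = N(\log N)^{A+50}/H$ this error is negligible. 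After that the \emph{entire} remaining integral $\int_{-T}^{T}|F(1/2+it)|\,dt$ lives in the range where the pointwise bound on $G$ holds, and one simply cites Lemma~9 of Baker--Harman--Pintz (with $g = 1$). There is no high/low split at all.

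On the substance of your high-frequency step, the phrasing is misleading and, as stated, points at a tool that does not apply there. BHP's Dirichlet-polynomial machinery (Lemmas 9--12, built on the fourth moment of $\zeta$, Hal\'asz--Montgomery, zero-density input, etc.) critically uses a pointwise saving on one factor; in the range $T_0 < |t| \leq N^{10}$ you have no such saving on $G$, so "invoke the BHP mean-value bound on the full product $F_1F_2G$" is not available. What \emph{does} work there is just Cauchy--Schwarz plus the classical Montgomery--Vaughan mean-value theorem applied to the pairing $(F_1G)\cdot F_2$: one gets a contribution
\[
\ll \frac{N^{1/2}}{T'}\,(T' + M_1 L)^{1/2}(T' + M_2)^{1/2}(\log N)^{O(1)}
\]
on each dyadic block $|t|\asymp T' > T_0$. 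Checking the three regimes for $T'$ relative to $M_1 L$ and $M_2$, and using that \eqref{eq:BHP-condition} forces $\alpha_1, \alpha_2 < 2/3 < \theta$ (so $M_1, M_2, L \ll H\,N^{-\varepsilon}$), every block is $\ll H\,(\log N)^{-A-1}$ or better, and the dyadic sum is acceptable. So your plan can be repaired, but the repair is an elementary mean-value bound rather than anything from BHP. The upshot is that your route works but is longer; the paper's choice of a small Perron height, paid for by the Shiu bound, eliminates the entire high-frequency discussion.
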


\begin{proof} When $T = N(\log N)^{A+50}/H$ and 
\[
F(s) = \sum_{\ell \sim L} \frac{c_\ell}{l^{s}} \sum_{m_1 \sim M_1} \frac{a_{m_1}}{m_1^{s}}\sum_{m_2 \sim M_2} \frac{b_{m_2}}{m_2^{s}},
\]
Perron's formula (see e.g. \cite[Corollary 5.3]{MontVau07}) together with the divisor bound (see~\eqref{eq:div-bound} below) implies
\[
\begin{split}
&\sum_{\substack{N < \ell m_1 m_2 \leq N+H \\ \ell \sim L, m_1 \sim M_1, m_2 \sim M_2}} a_{m_1} b_{m_2} c_\ell \\
&= \frac{1}{2\pi i} \int_{1/2-iT}^{1/2+iT} F(s) \frac{(N+H)^s - N^s}{s} \d s + O\left(\frac{H}{(\log N)^{A+2}}\right) \\
&\ll \min\left\{\frac{H}{N^{1/2}}, \frac{N^{1/2}}{T}\right\} \int_{-T}^{T} |F(1/2+it)| dt +  O\left(\frac{H}{(\log N)^{A+2}}\right).
\end{split}
\]
The claim now follows from~\cite[Lemma 9]{BHP01} with $g=1$ (alternatively see~\cite[Lemma 7.3]{Harman07}).
\end{proof}




We end this section by speculating on what happens with general nilsequences $\psi$.  We expect that the following rough statement can be proved by our minor arc argument, using the quantitative Leibman theorem due to Green and Tao~\cite[Theorem 2.9]{GT12} in place of Weyl's inequality. See~\cite{GT12} for the precise definitions of the terms below.

Let $G/\Gamma$ be a nilmanifold, $g$ be a polynomial sequence on $G$, and $\varphi$ be a smooth function on $G/\Gamma$. Let $\psi(n) = \varphi(g(n)\Gamma)$. If
\[ \left|\sum_{N < n \leq N+H} \Lambda(n) \psi(n) \right| \approx H, \]
then there is a nontrivial horizontal character $\chi$ on $G$ (with bounded modulus), such that the coefficients of the polynomial
\[ \chi \circ g(n) = \sum_{j=1}^k \alpha_j(n - N)^j \]
satisfy
\[ \|j\alpha_j + (j+1)N\alpha_{j+1}\| \leq \frac{N}{H^{j+1}} \]
for all $1 \leq j \leq k$. This is the same as saying that the polynomial $\chi\circ g(n)$ is roughly the same as $n^{it}$ on $(N, N+H]$, but we do not know how to use this information to say something about the nilsequence $\psi$.

\section{Proof of Propositions~\ref{prop:typeI} and \ref{prop:typeII}}
\label{sec:typeIandII}

 We need the following reformulation of Weyl's inequality. This is a direct consequence of~\cite[Proposition 4.3]{GT12}, and also a special case of a more general quantitative equidistribution result on nilsequences~\cite[Theorem 2.9]{GT12}.

\begin{lemma}\label{lem:weyl}
Let $N \geq 2$. Let $g(n) = \alpha_1 n + \cdots + \alpha_k n^k$ be a polynomial of degree $k$. If
\[ \left|\sum_{n \in I} \e(g(n))\right| \geq \delta N \]
for some interval $I \subset \{1,2,\cdots,N\}$ and some $\delta \in (0,1/2)$, then there exists a positive integer $q \leq \delta^{-O_k(1)}$ such that
\[ \|q\alpha_j\| \leq \delta^{-O_k(1)} \frac{1}{N^j} \]
for all $1 \leq j \leq k$.
\end{lemma}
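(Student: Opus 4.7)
The strategy is induction on the degree $k$, using standard Weyl differencing at each step to extract the leading coefficient and then peeling off one degree at a time. The case $k=1$ is immediate from the geometric series estimate $|\sum_{n \in I} \e(\alpha_1 n)| \leq \min(|I|, \|\alpha_1\|^{-1})$, which together with $|I| \leq N$ and the hypothesis forces $\|\alpha_1\| \leq \delta^{-1}/N$, so $q=1$ works.

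For the inductive step, I would perform $k-1$ van~der~Corput / Weyl differencings on the exponential sum to produce the standard bound
\[
(\delta N)^{2^{k-1}} \ll N^{2^{k-1}-k} \sum_{|h_1|,\dots,|h_{k-1}| \leq N} \Bigl|\sum_{n \in I(\mathbf h)} \e(k!\, \alpha_k h_1 \cdots h_{k-1} n)\Bigr|,
\]
where $I(\mathbf h) \subset I$ is a subinterval. Evaluating the inner geometric sum and pigeonholing in $\mathbf h$ yields a nonzero integer vector $\mathbf h$ with $\|k!\,\alpha_k h_1 \cdots h_{k-1}\| \leq \delta^{-O_k(1)}/N$; a Dirichlet-style clearing of denominators then produces a positive integer $q_k \leq \delta^{-O_k(1)}$ with $\|q_k \alpha_k\| \leq \delta^{-O_k(1)}/N^k$. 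This is the classical content of Weyl's inequality applied in contrapositive form.

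To handle the lower coefficients, I would split $I$ into residue classes modulo $q_k$ and pass to the densest class $s$. On that class the phase becomes $g(q_k m + s) = \widetilde g(m)$, where $\widetilde g$ is a polynomial of degree $k$ in $m$ whose coefficients $\widetilde\alpha_j$ are triangular linear combinations of $\alpha_j, \alpha_{j+1}, \dots, \alpha_k$, multiplied by powers of $q_k$ and $s$. The leading coefficient is $\widetilde\alpha_k = q_k^k \alpha_k$, which lies within $\delta^{-O_k(1)}q_k^{k-1}/N^k$ of an integer; since $m$ ranges over an interval of length $\asymp N/q_k$, the contribution of $\widetilde\alpha_k m^k$ to the phase is almost constant (up to a Lipschitz error that can be absorbed by partial summation), leaving a degree $k-1$ exponential sum that is still $\gtrsim \delta$ times its length. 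The induction hypothesis then supplies a common modulus $q' \leq \delta^{-O_k(1)}$ controlling $\widetilde\alpha_1,\dots,\widetilde\alpha_{k-1}$, and inverting the triangular system back to the $\alpha_j$ transfers this to control on $\alpha_1,\dots,\alpha_{k-1}$. Taking $q$ to be the least common multiple of $q_k$ and $q'$ (possibly times a bounded factorial factor from the back-substitution) finishes the induction.

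The main obstacle is the bookkeeping: one must verify that the $\delta^{-O_k(1)}$ losses compound only polynomially through the $k$ rounds of induction, and that the triangular change of variables from $\widetilde\alpha_j$ back to $\alpha_j$ inflates the denominator $q$ and the numerator of $\|q\alpha_j\|$ only by a $k$-dependent amount. Both are routine but require care to package a single modulus $q$ working simultaneously for all $j$; this is precisely the form in which the result is recorded as \cite[Proposition 4.3]{GT12} and (in greater generality) in~\cite[Theorem 2.9]{GT12}.
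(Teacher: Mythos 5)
The paper does not supply a proof of Lemma~\ref{lem:weyl} at all: it simply records the statement as a direct consequence of \cite[Proposition~4.3]{GT12} (and as a special case of \cite[Theorem~2.9]{GT12}), so there is no in-paper argument to compare against. Your sketch is a plausible self-contained derivation along the classical Weyl-differencing lines, and you correctly identify at the end that this is exactly what Green--Tao package. The high-level plan --- extract $q_k$ controlling $\alpha_k$ via Weyl's inequality, restrict to the densest residue class mod $q_k$, strip off the leading term, apply the degree-$(k-1)$ induction hypothesis to the transformed coefficients $\widetilde\alpha_j$, then invert the triangular change of variables and take an lcm --- is sound, and the back-substitution does only inflate $q$ and the numerators by $\delta^{-O_k(1)}$ as you assert.

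One claim is stated too strongly: after restricting to a class $n = q_k m + s$, the leading coefficient $\widetilde\alpha_k = q_k^k\alpha_k$ is \emph{not} ``almost constant'' on the $m$-range. Writing $q_k\alpha_k = a + \varepsilon$ with $a \in \mathbb{Z}$ and $|\varepsilon| \leq \delta^{-O_k(1)}/N^k$, the relevant phase $q_k^{k-1}\varepsilon\, m^k$ ranges over an interval of length about $q_k^{k-1}|\varepsilon|(N/q_k)^k \asymp \delta^{-O_k(1)}/q_k$, which can be $\gg 1$. So the stripping step is not a ``Lipschitz error''; it is a genuine partial summation (or a further subdivision of the $m$-range into $\delta^{-O_k(1)}$ pieces plus pigeonholing), costing an extra $\delta^{-O_k(1)}$ factor. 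That cost is harmless here because the target allows $\delta^{-O_k(1)}$ losses, and you do flag the compounding-loss bookkeeping as the main care point --- but the phrase ``almost constant'' would mislead a reader into thinking the step is free. With that caveat, your proposal is a correct elementary alternative to simply citing \cite{GT12} as the paper does.
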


Another useful lemma will be the following result of Green and Tao (see~\cite[Lemma 4.5]{GT12}).
\begin{lemma}
\label{GT-Lemma4.5}
Let $N \geq 2$ and let $g(n) = \alpha_1 n + \cdots + \alpha_k n^k$ be a polynomial of degree $k$. Suppose that $0 < \delta < 1/2$ and $\varepsilon \leq \delta/2$, that $I \subseteq \mathbb{R} / \mathbb{Z}$ is an interval of length $\varepsilon$ and that $g(n) \pmod{\mathbb{Z}} \in I$ for at least $\delta N$ values of $n \in \{1, \dotsc,  N\}$. Then there exists $q \in \mathbb{Z}$ with $0 < |q| \ll \delta^{-O_k(1)}$, such that $\| q \alpha_j \| \ll \varepsilon \delta^{-O_k(1)}/N^j$ for every $j = 1, \dotsc, k$.
\end{lemma}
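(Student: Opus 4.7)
The natural approach is induction on the degree $k$, combining Lemma~\ref{lem:weyl} with a van der Corput--style differencing argument. The base case $k = 1$ requires a direct analysis: invoke Dirichlet's theorem with denominator bound $Q = \lceil 4/\delta \rceil$ to obtain $q \leq Q$ with $\alpha_1 = p/q + \beta$, $|\beta| \leq \delta/(4q)$; the values $\{\alpha_1 n \pmod{1} : n = 1, \dotsc, N\}$ then split into at most $q$ clusters (indexed by $pn \bmod q$), each an arithmetic progression of $\sim N/q$ points with common difference $|\beta q|$. Counting the $n$ with $g(n) \in I$ via these clusters and invoking $\varepsilon \leq \delta/2$ forces $\|q\alpha_1\| = |q\beta| \ll \varepsilon/(\delta N)$, which is the desired base case.

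For the inductive step, assume the lemma for degree $k - 1$ and let $A = \{n \in [1, N] : g(n) \in I\}$, so $|A| \geq \delta N$. The additive energy identity $|A|^2 = \sum_h \#\{n : n, n + h \in A\}$ (summed over $|h| < N$) combined with Markov's inequality yields $\gg \delta^2 N$ positive shifts $h$ for which the polynomial $g_h(n) := g(n + h) - g(n)$ (of degree $k - 1$ in $n$) lies in an interval of length $2\varepsilon \pmod{1}$ for $\geq \delta^2 N/4$ values of $n$. The induction hypothesis applied to each such $g_h$ furnishes $q_h \leq \delta^{-O_k(1)}$ with $\|q_h c_j^{(h)}\| \ll \varepsilon \delta^{-O_k(1)}/N^j$ for $j = 1, \dotsc, k - 1$, where $c_j^{(h)}$ denotes the coefficient of $n^j$ in $g_h$. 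A pigeonhole in $q_h$ extracts a common modulus $q^* \leq \delta^{-O_k(1)}$ valid on a subset $H' \subset \{1, \dotsc, N\}$ of size $\gg \delta^{O_k(1)} N$.

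The key observation is that $h \mapsto c_j^{(h)} = \sum_{i=j+1}^k \binom{i}{j} \alpha_i h^{i-j}$ is itself a polynomial in $h$ of degree $k - j \leq k - 1$, and $q^* c_j^{(h)} \pmod{1}$ lies in a short interval for each $h \in H'$. A second application of the induction hypothesis, now to $q^* c_j^{(h)}$ as a polynomial in $h$, yields $q_j \leq \delta^{-O_k(1)}$ with $\|q_j q^* \binom{i}{j} \alpha_i\| \ll \varepsilon \delta^{-O_k(1)}/N^i$ for $j < i \leq k$. Taking $j = 1$ (so that $\binom{i}{1} = i$) simultaneously controls $\alpha_2, \dotsc, \alpha_k$ via the single denominator $q^{**} := k!\, q_1 q^* \ll \delta^{-O_k(1)}$. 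Finally $\alpha_1$ is recovered by substituting these approximations back into $g$: one obtains $q^{**} g(n) \equiv q^{**} \alpha_1 n + O(\varepsilon \delta^{-O_k(1)}) \pmod{1}$, so $q^{**} \alpha_1 n$ lies in an arc of length $\ll \varepsilon \delta^{-O_k(1)} \pmod{1}$ for $\geq \delta N$ values of $n$, and the $k = 1$ base case finishes the job.

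The main technical obstacle is bookkeeping the compounded $\delta^{-O_k(1)}$ factors through the nested recursions and preserving the parameter condition $\varepsilon' \leq \delta'/2$ at each step. The recursion works cleanly provided $\varepsilon \leq \delta^{C_k}$ and $N \geq \delta^{-C_k}$ for a sufficiently large constant $C_k$; in the remaining cases (large $\varepsilon$ or small $N$) the desired bound is weak enough to follow either from a direct Fourier-smoothing plus Weyl argument (applying Lemma~\ref{lem:weyl} to $h_0 g(n)$ for a suitable $|h_0| \ll 1/\varepsilon$) or trivially by taking $q = 1$.
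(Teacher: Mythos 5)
This lemma is cited from Green and Tao \cite[Lemma 4.5]{GT12}; the paper gives no proof of its own, so there is no ``paper's proof'' to compare against. Your van der Corput differencing scheme --- difference to reduce the degree, run the degree-$(k-1)$ induction twice (once in $n$ for each shift $h$, once in the shift variable $h$ on the resulting coefficients), then recover $\alpha_1$ by substituting the obtained approximations back into $g$ --- is a sensible strategy and is broadly in the spirit of the arguments in \cite[\S 4]{GT12}. The compounded $\delta^{-O_k(1)}$ losses are harmless since the recursion depth is bounded by $k$, and your accounting for the constraint $\varepsilon' \leq \delta'/2$ at each stage and for the density loss under pigeonholing in $q_h$ both check out under the standing assumption that $\varepsilon \leq \delta^{C_k}$ and $N \geq \delta^{-C_k}$.

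The genuine gap is the claim that the small-$N$ case is settled ``trivially by taking $q=1$.'' For bounded $N$ the conclusion is not trivial; indeed it can fail outright. Take $k=1$, $N=2$, $\delta = 2/5$, $\alpha_1 = 1/\sqrt{2}$: a single value of $n$ already satisfies the hypothesis, yet by shrinking $\varepsilon$ the required bound $\|q\alpha_1\| \ll_k \varepsilon/N$ becomes impossible for every $q$ bounded in terms of $\delta$ and $k$ alone. Your alternative fallback, Fourier-smoothing followed by Lemma~\ref{lem:weyl}, produces a modulus $|q| \ll \delta^{-O_k(1)}/\varepsilon$, which likewise fails to be $\ll \delta^{-O_k(1)}$ when $\varepsilon$ is very small. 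The underlying issue is that the statement as reproduced in the paper has silently dropped the escape clause present in Green and Tao's original formulation, which reads ``either $N \ll_k \delta^{-O_k(1)}$, or else there exists $q \dotsc$.'' With that clause restored your small-$N$ branch is disposed of immediately; without it the statement is literally false for small $N$ and tiny $\varepsilon$, so no proof can close that case. Your large-$\varepsilon$ fallback (Fourier plus Weyl when $\delta^{C_k} \leq \varepsilon \leq \delta/2$) is sound, since there $1/\varepsilon \leq \delta^{-C_k}$ and the factor $\ll 1/\varepsilon$ entering the modulus remains $\ll \delta^{-O_k(1)}$.
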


We will constantly use the divisor bound (which follows e.g. from the Shiu bound~\cite{Shiu80}): For any $r, s \in \mathbb{N}$ and $\kappa > 0$,
\begin{equation}
\label{eq:div-bound}
\sum_{x < n \leq x + x^\kappa} \tau_r(n)^s \ll x^\kappa (\log x)^{r^s-1}.
\end{equation}

\begin{proof}[Proof of Proposition~\ref{prop:typeI}]
From the hypothesis we have
\[ \delta H (\log N)^{13} \leq \sum_{m \sim M} \tau_5(m) \left| \sum_{N/m < \ell \leq (N+H)/m} \psi(\ell) \e(g(\ell m)) \right|.  \]
By the Cauchy-Schwarz inequality and \eqref{eq:div-bound} this implies that
\begin{equation}\label{eq:Lemma3.2(1)} 
\delta^2 \frac{H^2}{M} (\log N)^{2} \ll \sum_{m \sim M} \left| \sum_{N/m < \ell \leq (N+H)/m} \psi(\ell) \e(g(\ell m)) \right|^2. 
\end{equation}

Denote by $\CM$ the set of $m \sim M$ such that
\[ \left| \sum_{N/m < \ell \leq (N+H)/m} \psi(\ell) \e(g(\ell m)) \right| \gg \delta \frac{H}{M} \log N. \]
Thus the contribution from those $m \notin \CM$ to the sum in~\eqref{eq:Lemma3.2(1)} is negligible compared to the lower bound, and it follows that
\[ \delta^2 \frac{H^2}{M} (\log N)^{2} \ll \sum_{m \in \CM} \left| \sum_{N/m < \ell \leq (N+H)/m} \psi(\ell) \e(g(\ell m)) \right|^2 \ll |\CM| \cdot \frac{H^2}{M^2}. \] 
Hence $|\CM| \gg \delta^2 M$. For $m \in \CM$, let $\ell_0 = \lceil N/m\rceil$ be the starting point of the range of summation over $\ell$. We can conclude that
\[ \left|\sum_{\ell_0 < \ell \leq \ell_1} \e(g(\ell m)) \right| \gg \delta \frac{H}{M} \]
for some $\ell_1 \leq (N+H)/m$. Indeed, when $\psi(\ell) = 1$ this is trivial, and when $\psi(\ell) = \log\ell$ this follows from partial summation. We will apply Lemma~\ref{lem:weyl} to the shifted sequence $\ell \mapsto g((\ell_0+\ell)m)$. Note that
\[ g((\ell_0+\ell)m) = \sum_{i=1}^k \alpha_i (\ell m + b)^i, \]
where $b = \ell_0m - N$. The only property we will use about $b$ is the bound $|b| \leq H$. The coefficient of $\ell^j$ in this polynomial is given by
\[ \beta_j := \sum_{j \leq i \leq k} \alpha_i \binom{i}{j} m^j b^{i-j}. \]
By Lemma~\ref{lem:weyl}, there exists a positive integer $q \leq \delta^{-O(1)}$ such that
\[ \|q \beta_j\| \leq \delta^{-O(1)} \left(\frac{M}{H}\right)^j \]
for all $1 \leq j \leq k$ and $m \in \CM$. Below we will allow ourselves to enlarge $q$ by multiplying it with a positive integer at most $\delta^{-O(1)}$, and this process will be done $O(1)$ times, so the bound $q \leq \delta^{-O(1)}$ will remain to hold in the end. We will show by induction the desired Diophantine information on $\alpha_j$:
\[ \|q \alpha_j\| \leq \delta^{-O(1)} \frac{1}{H^j} \]
for all $1 \leq j \leq k$.

\subsubsection*{The base case $j=k$} 

Since $\beta_k = \alpha_k m^k$, we have
\[ \| q\alpha_k m^k\| \leq \delta^{-O(1)} \left(\frac{M}{H}\right)^k. \]
This holds for $\gg \delta^2 M$ values of $m \sim M$. Hence by Lemma~\ref{GT-Lemma4.5} (which is applicable by our assumption that $M/H \leq \delta^C$), we conclude that
\[ \|q\alpha_k\| \leq \delta^{-O(1)} \frac{1}{H^k} \]
as desired.

\subsubsection*{The induction step}

Now let $1 \leq j < k$, and assume that the claim has already been proved for larger values of $j$. Then
\[ 
\begin{split}
\left\| q(\beta_j - \alpha_j m^j) \right\| &= \left\| q \sum_{j < i \leq k} \alpha_i \binom{i}{j} m^j b^{i-j} \right\| \\
&\ll M^j \sum_{j < i \leq k} |b|^{i-j} \|q\alpha_i\| \leq \delta^{-O(1)} \left(\frac{M}{H}\right)^j. 
\end{split}
\]
It follows that
\[ \|q \alpha_j m^j\| \leq \delta^{-O(1)} \left(\frac{M}{H}\right)^j. \]
This holds for $\gg \delta^2 M$ values of $m \sim M$. Hence by Lemma~\ref{GT-Lemma4.5} (which is again applicable by our assumption that $M/H \leq \delta^C$), we conclude that
\[ \|q \alpha_j\| \leq \delta^{-O(1)} \frac{1}{H^j}. \]
This completes the proof.
\end{proof}

\begin{proof}[Proof of Proposition~\ref{prop:typeII}]
From the hypothesis we have
\[ \delta H (\log N)^{32} \leq \sum_{L/2\leq \ell \leq 2L} \tau_5(\ell) \left|\sum_{\substack{m \sim M \\ N < \ell m \leq N+H}} b_m \e(g(\ell m)) \right|. \]
By the Cauchy-Schwarz inequality and \eqref{eq:div-bound}, we have
\[ \delta^2 H^2 (\log N)^{40} \ll L \sum_{L/2 \leq \ell \leq 2L} \left|\sum_{\substack{m \sim M \\ N < \ell m \leq N+H}} b_m \e(g(\ell m)) \right|^2. \]
Expanding the square and changing the order of summation, we obtain
\[ \sum_{\substack{m,m' \sim M \\ |m-m'| \leq 2H/L}} \tau_3(m) \tau_3(m') \left| \sum_{\substack{L/2 \leq \ell \leq 2L \\ N < \ell m, \ell m' \leq N+H}} \e( g(\ell m) - g(\ell m') ) \right| \gg \delta^2 \frac{H^2}{L} (\log N)^{40}. \]
By the Cauchy-Schwarz inequality and the bound
\[
\sum_{\substack{m,m' \sim M \\ |m-m'| \leq 2H/L}} (\tau_3(m) \tau_3(m'))^2 \ll \frac{H}{L} \sum_{m\sim M} \tau_3(m)^4 \ll \frac{HM}{L} (\log M)^{80}
\]
coming from~\eqref{eq:div-bound}, this implies that
\[ \sum_{\substack{m,m' \sim M \\ |m-m'| \leq 2H/L}} \left| \sum_{\substack{L/2 \leq \ell \leq 2L \\ N < \ell m, \ell m' \leq N+H}} \e( g(\ell m) - g(\ell m') ) \right|^2 \gg \delta^4 \frac{H^3}{LM}. \]

We will consider intervals of length $3H/L$ of the form $J = [m_0, m_0 + 3H/L]$ for some $M/2 \leq m_0 \leq 2M$. Since each pair $(m,m')$ with $m,m'\sim M$ and $|m-m'| \leq 2H/L$ appears in $\gg H/L$ such intervals $J$, we have
\[ \sum_{M/2 \leq m_0 \leq 2M} \sum_{m,m' \in [m_0,m_0 + 3H/L]}  \left| \sum_{\substack{L/2 \leq \ell \leq 2L \\ N < \ell m, \ell m' \leq N+H}} \e( g(\ell m) - g(\ell m') ) \right|^2 \gg \delta^4 \frac{H^4}{L^2M}. \]
Hence the inequality
\[ \sum_{m,m' \in J} \left| \sum_{\substack{L/2 \leq \ell \leq 2L \\ N < \ell m, \ell m' \leq N+H}} \e( g(\ell m) - g(\ell m') ) \right|^2 \gg \delta^4 \left(\frac{H}{L}\right)^2 \left(\frac{H}{M}\right)^2 \]
holds for $\gg \delta^4 M$ choices of $m_0$. For the moment we fix one such choice of $m_0$ and $J$, but towards the end of the argument we will allow $m_0$ to vary. Denote by $\CM$ the set of all pairs $(m,m') \in J \times J$ such that
\[ \left|\sum_{\substack{L/2 \leq \ell \leq 2L \\ N < \ell m, \ell m' \leq N+H}} \e(g(\ell m) - g(\ell m')) \right| \gg \delta^2 \frac{H}{M}. \]
It follows that $|\CM| \gg \delta^4 |J|^2$. For $(m,m') \in \CM$, let $I_{m,m'}$ be the range of summation for $\ell$:
\[ I_{m,m'} = \{L/2 \leq \ell \leq 2L : N < \ell m, \ell m' \leq N+H\}. \]
Note that all of these $I_{m,m'}$ are contained in a common interval $I = [\ell_0+1, \ell_0+|I|]$ of length $|I| = O(H/M)$ for some $\ell_0 = N/m_0 + O(H/M)$, which depends on $m_0$ but not on $m,m'$. We are now in a position to apply Lemma~\ref{lem:weyl} to the shifted sequence
\[ \ell \mapsto g((\ell_0+\ell)m) - g((\ell_0+\ell)m'). \]
Note that
\[ g((\ell_0+\ell) m) - g((\ell_0+\ell) m') =  \sum_{i=1}^k \alpha_i [(\ell m + b)^i - (\ell m' + b')^i], \]
where $b = \ell_0m-N$, $b' = \ell_0m' - N$  (bear in mind the dependence of $b,b'$ on $m,m'$). The coefficient of $\ell^j$ in this polynomial is given by
\[ \beta_j(m,m') := \sum_{j \leq i \leq k} \alpha_i \binom{i}{j} (m^j b^{i-j} - m'^j b'^{i-j}). \]
By Lemma~\ref{lem:weyl}, there exists a positive integer $q \leq \delta^{-O(1)}$ such that
\[ \|q \beta_j(m,m')\| \leq \delta^{-O(1)} \frac{1}{|I|^j} \leq \delta^{-O(1)} \left(\frac{M}{H}\right)^j \]
for all $1 \leq j \leq k$ and $(m,m') \in \CM$. In the rest of the arguments we will always allow ourselves to enlarge $q$ by multiplying it with a positive integer at most $\delta^{-O(1)}$, and this process will be done $O(1)$ times so that the bound $q \leq \delta^{-O(1)}$ will remain to hold in the end.  Let
\[ \gamma_j(m) = m^j \sum_{j \leq i \leq k} \alpha_i \binom{i}{j} (\ell_0m - N)^{i-j}, \]
so that $\beta_j(m,m') = \gamma_j(m) - \gamma_j(m')$. The Diophantine information on $\beta_j(m,m')$ implies that $\|q\gamma_j(m)\|$ lies in an arc of length $\delta^{-O(1)} (M/H)^j$ for $\gg \delta^4 |J|$ values of $m \in J$, and thus $\|q\gamma_j(m_0+m)\|$ lies in an arc of length $\delta^{-O(1)} (M/H)^j$ for $\gg \delta^4 H/L$ values of  $1 \leq m \leq 3H/L$. Using this we will obtain the desired Diophantine information:
\[ \|q (j \alpha_j + (j+1)N \alpha_{j+1})\| \leq \delta^{-O(1)} \frac{N}{H^{j+1}} \]
by induction on $j$.

\subsubsection*{The base case $j=k$}

Note that
\[ \gamma_k(m_0+m) = \alpha_k (m_0+m)^k. \]
As a polynomial in $m$, its linear coefficient is $km_0^{k-1}\alpha_k$. By Lemma~\ref{GT-Lemma4.5} (which is applicable by our assumption that $M/H \leq \delta^C$ for some sufficiently large $C$), we deduce that
\[ \|q k m_0^{k-1} \alpha_k\| \leq \delta^{-O(1)} \left(\frac{M}{H}\right)^k \frac{L}{H}. \]
Recall that this holds for $\gg \delta^4 M$ values of  $M/2 \leq m_0 \leq 2M$, so by Lemma~\ref{GT-Lemma4.5} again (which is again applicable by our assumption that $M/H, L/H \leq \delta^C$), we conclude that
\[ \|qk \alpha_k\| \leq \delta^{-O(1)} \frac{ML}{H^{k+1}} = \delta^{-O(1)} \frac{N}{H^{k+1}}, \]
as desired. 

\subsubsection*{The induction step}

Now let $1 \leq j < k$, and assume that the claim has already been proved for larger values of $j$. Note that
\[ \gamma_j(m_0+m) = (m_0+m)^j \sum_{j \leq i \leq k} \alpha_i \binom{i}{j} (\ell_0m + h_0)^{i-j},
\]
where $h_0 = \ell_0m_0 - N$ satisfies $|h_0| = O(H)$. As a polynomial in $m$, its linear coefficient is
\[ \lambda = m_0^j \sum_{j < i \leq k} \alpha_i \binom{i}{j} (i-j) \ell_0 h_0^{i-j-1} + j m_0^{j-1} \sum_{j \leq i \leq k} \alpha_i \binom{i}{j} h_0^{i-j}, \]
and by Lemma~\ref{GT-Lemma4.5} we have
\begin{equation}
\label{eq:qlambdabound}
\|q \lambda\| \leq \delta^{-O(1)} \left(\frac{M}{H}\right)^j \frac{L}{H}. 
\end{equation}
The expression for $\lambda$ can be rewritten as
\[ \lambda = m_0^{j-1} \sum_{j \leq i \leq k} \alpha_i \binom{i}{j}( (i-j) \ell_0m_0h_0^{i-j-1} + j h_0^{i-j}). \]
Since $\ell_0m_0 = h_0+N$, we have
\[ \lambda = m_0^{j-1} \sum_{j\leq i \leq k} \alpha_i \binom{i}{j} ((i-j)N h_0^{i-j-1} + i h_0^{i-j}). \]
By regrouping terms according to the exponent of $h_0$ we obtain
\[ 
\begin{split}
\lambda &= m_0^{j-1} \sum_{j \leq i \leq k} h_0^{i-j}\left(i \alpha_i \binom{i}{j} + (i+1-j)N \alpha_{i+1} \binom{i+1}{j} \right) \\
&= m_0^{j-1} \sum_{j \leq i \leq k} h_0^{i-j} \binom{i}{j} (i\alpha_i + (i+1)N\alpha_{i+1}). \\
\end{split}
\]
By induction hypothesis, we know that, when considering $\|q \lambda\|$, all summands above with $i > j$ contribute
\[ \ll H^{i-j} \delta^{-O(1)} \frac{N}{H^{i+1}} \leq \delta^{-O(1)} \frac{N}{H^{j+1}},  \]
and thus
\[ \|q \lambda\| = \|q m_0^{j-1}  (j\alpha_j + (j+1)N\alpha_{j+1})\| + O\left(\delta^{-O(1)} \frac{M^{j-1}N}{H^{j+1}} \right). \]
Hence the bound~\eqref{eq:qlambdabound} on $\|q\lambda\|$ implies that
\[ \|q m_0^{j-1} (j\alpha_j + (j+1)N\alpha_{j+1})\| \leq \delta^{-O(1)} \frac{M^{j-1}N}{H^{j+1}}. \]
Since this is true for $\gg \delta^4 M$ values of $M/2 \leq m_0 \leq 2M$, another application of Lemma~\ref{GT-Lemma4.5}  leads to
\[ \|q (j\alpha_j + (j+1)N\alpha_{j+1})\| \leq \delta^{-O(1)} \frac{N}{H^{j+1}}, \]
as desired. This completes the proof.
\end{proof}

\section{Proof of Theorems~\ref{thm:exp-alpha},~\ref{thm:exp-g}, and~\ref{thm:exp-g-mu}}
\label{sec:combine}

\begin{proof}[Proof of Theorem~\ref{thm:exp-g}] Assume that
\[ 
\left|\sum_{N < n \leq N +H} \Lambda(n) \e(g(n))\right| \geq \frac{H}{(\log N)^A}.
\]
By Heath-Brown's identity~\cite[Section 2]{H-B82} (alternatively see e.g.~\cite[Section 2.5]{Harman07}), the left hand side equals
\[
\sum_{j = 1}^3 (-1)^{j-1} \binom{3}{j} \sum_{\substack{N < r_1 \dotsm r_{2j} \leq N +H \\ i > j \implies r_i \leq (2N)^{1/3}}} (\log r_1) \mu(r_{j+1}) \dotsm \mu(r_{2j}) \e(g(r_1 \dotsm r_{2j}))
\]
Splitting the summation variables into dyadic ranges, we see that, for each appearing sum, at least one of the following three cases must occur:
\begin{enumerate}
\item (Type-I) For some $M \leq N^{2/3}$ and $|b_m| \leq \tau_5(m)$, we have
\[ \sum_{\substack{\ell, m \\ m \sim M \\ N < \ell m \leq N+H}} b_m \e(g(\ell m)) \gg \frac{H}{(\log N)^{A+6}}. \]
\item (Type-I) For some $M \leq N^{2/3}$ and $|b_m| \leq \tau_5(m)$, we have
\[ \sum_{\substack{\ell, m \\ m \sim M \\ N < \ell m \leq N+H}} (\log\ell) b_m \e(g(\ell m)) \gg \frac{H}{(\log N)^{A+6}}. \]
\item (Type-II) For some $R_1, \dotsc, R_6 \leq 2N^{1/3}$ with $R_1 \dotsm R_6 \asymp N$, we have
\[ \sum_{\substack{r_1, \dotsc, r_6 \\ r_i \sim R_i \\ N < r_1 \dotsm r_6 \leq N+H}} a^{(1)}_{r_1} \dotsm a^{(6)}_{r_6} \e(g(r_1 \dotsm r_6)) \gg \frac{H}{(\log N)^{A+6}}, \]
where each sequence $a^{(j)}_r$ is one of $1$, $\log r$ or $\mu(r)$.
\end{enumerate}
Indeed, if $r_1, \dotsc, r_j \leq 2N^{1/3}$ then the term belongs to case (3), and if $r_i > 2N^{1/3}$ for some $i \in \{1, \dotsc ,j\}$, we may write $\ell = r_i$ and end up in case (1) if $i \neq 1$ or case (2) if $i = 1$.

In either case (1) or (2), the claim follows immediately from Proposition~\ref{prop:typeI} with $\delta = (\log N)^{-A - 20}$. In case (3) we first notice that some product of $R_i$ must lie in $[N^{1/3}, 2N^{2/3}]$. Hence we can apply Proposition~\ref{prop:typeII} with $\delta = (\log N)^{-A-43}$ to find a positive integer $q \leq (\log N)^{O(A)}$ such that
\[ \|q(j\alpha_j + (j+1)N\alpha_{j+1})\| \leq (\log N)^{O(A)} \frac{N}{H^{j+1}} \]
for all $1 \leq j\leq k$, with the convention that $\alpha_{k+1} = 0$. 

Let $B = CA$ for some sufficiently large constant $C = C(k)$, and let $H_0 = H(\log N)^{-B}$. We divide $(N, N+H]$ into arithmetic progressions of the form
\[ P = \{n_0 < n \leq n_0 + H_0 \colon n \equiv a \pmod{k!q}\}, \]
where $n_0 \in [N, N+H]$ and $(a, k!q)=1$. Our hypothesis implies that for at least one such progression $P$, we have
\begin{equation}
\label{eq:ri-sum-P-large} 
\left|\sum_{\substack{r_1 \dotsm r_6 \in P \\ r_i \sim R_i}} a^{(1)}_{r_1} \dotsm a^{(6)}_{r_6} \e(g(r_1 \dotsm r_6)) \right| \gg \frac{|P|}{(\log N)^{A+6}}.  
\end{equation}
For the remainder of the proof we fix such a progression $P$. We claim that there exists some $\eta$ with $|\eta| = 1$ and some $t$ with $|t| \leq (N/H)^{k+1}(\log N)^{O(A)}$, such that
\begin{equation}\label{eq:g-nit} 
\e(g(n)) = \eta n^{it} (1 + O((\log N)^{-A-15})) 
\end{equation}
for all $n \in P$. To see this, first write
\[ g(n) = \sum_{j=1}^k \alpha_j(n - N)^j = \sum_{j=1}^k \beta_j (n-n_0)^j, \]
so that
\[ \beta_j = \sum_{i=j}^k \binom{i}{j} (n_0 - N)^{i-j} \alpha_i. \]
After some algebra one derives that
\[ j\beta_j + (j+1) n_0\beta_{j+1} = \sum_{i=j}^k \binom{i}{j} (n_0-N)^{i-j} [i\alpha_i + (i+1) N\alpha_{i+1}] \]
for all $1 \leq j \leq k$, with the convention that $\beta_{k+1} = 0$. Hence
\[ \|q(j\beta_j + (j+1)n_0\beta_{j+1}\| \ll \sum_{i=j}^k H^{i-j} (\log N)^{O(A)} \frac{N}{H^{i+1}} \leq (\log N)^{O(A)} \frac{N}{H^{j+1}}. \]
Now shift each $\beta_j$ by $(qj)^{-1}a_j$ for an appropriate $a_j \in \Z$ to get $\beta_j'$, so that
\begin{equation}\label{eq:alphak} 
|q(j\beta_j' + (j+1)n_0\beta_{j+1}')| \leq (\log N)^{O(A)} \frac{N}{H^{j+1}}
\end{equation}
for all $1 \leq j \leq k$. Let
\[ g'(n) = \sum_{j=1}^k \beta_j'(n-n_0)^j. \]
Note that for $n \in P$ we have
\[ \e(g(n)) = \e(g'(n)) \e\left(\sum_{j=1}^k \frac{a_j}{qj} (n-n_0)^j\right) = \eta\e(g'(n)),  \]
for some $\eta$ (independent of $n$) with $|\eta|=1$, since all $n \in P$ lie in the same residue class modulo $qj$ for each $j$.  By  induction one can deduce from~\eqref{eq:alphak} that
\begin{equation}\label{eq:alphak'} 
\left|\beta_j' - \frac{(-1)^{j-1}}{jn_0^{j-1}} \beta_1'\right| \leq (\log N)^{O(A)} \frac{1}{H^j}
\end{equation}
for all $1 \leq j \leq k  + 1$. In particular when $j=k+1$ this gives 
\[ |\beta_1'| \leq (\log N)^{O(A)} \frac{N^k}{H^{k+1}}. \]
Set $t = 2\pi n_0\beta_1'$, so that
\[ |t| \leq (\log N)^{O(A)} \left(\frac{N}{H}\right)^{k+1}. \]
For $n \in P$ we have
\[ n^{it} = n_0^{it} \exp\left( it\log\left(1 + \frac{n-n_0}{n_0}\right) \right). \]
Using the Taylor expansion
\[ \log\left(1 + \frac{n-n_0}{n_0}\right) = \sum_{j=1}^k \frac{(-1)^{j-1}}{j} \left(\frac{n-n_0}{n_0}\right)^j + O\left( \left(\frac{|n-n_0|}{N}\right)^{k+1} \right),
\]
we get
\[ n^{it} = n_0^{it} \e( \wt{g}(n)) \left(1 + O\left( |t|\left(\frac{H_0}{N}\right)^{k+1} \right) \right), \]
where 
\[ \wt{g}(n) = \frac{t}{2\pi} \sum_{j=1}^k \frac{(-1)^{j-1}}{j} \left(\frac{n-n_0}{n_0}\right)^j = \sum_{j=1}^k \frac{(-1)^{j-1}}{j n_0^{j-1}} \beta_1' (n-n_0)^j. \]
The error term above can be made $O((\log N)^{-A-15})$ by  choosing $B$ in the definition of $H_0$ large enough. Hence
\[ n^{it} = n_0^{it} \e(\wt{g}(n)) (1 + O( (\log N)^{-A-15} )). \]
Note that for $n \in P$ we have
\[ |g'(n) - \wt{g}(n)| \leq \sum_{j=1}^k \left|\beta_j' - \frac{(-1)^{j-1}}{jn_0^{j-1}} \beta_1'\right| (n-n_0)^j \leq \sum_{j=1}^k (\log N)^{O(A)} \frac{H_0^j}{H^j}, \]
which can again be made $\leq (\log N)^{-A-15}$ by choosing $B$ large enough. It follows that
\[ n^{it} = n_0^{it} \e(g'(n)) (1 + O( (\log N)^{-A-15} )) \]
for $n \in P$, and hence
\[ \e(g(n)) = (\eta n_0^{-it}) n^{it} (1 + O( (\log N)^{-A-15} )). \]
This establishes the claim~\eqref{eq:g-nit}. 

It then follows from~\eqref{eq:ri-sum-P-large} that
\[ \left|\sum_{\substack{r_1 \dotsm r_6 \in P \\ r_i \sim R_i}} a^{(1)}_{r_1} \dotsm a^{(6)}_{r_6} (r_1 \dotsm r_6)^{it} \right| \gg \frac{|P|}{(\log N)^{A+6}}.  \]
Decomposing this sum using Dirichlet characters mod $k!q$, we get
\begin{equation}
\label{eq:largeaisum} 
\left|\sum_{\substack{n_0 < r_1 \dotsm r_6 \leq n_0 + H_0 \\ r_i \sim R_i}} a^{(1)}_{r_1} \dotsm a^{(6)}_{r_6} \chi(r_1 \dotsm r_6) (r_1 \dotsm r_6)^{it}\right| \gg \frac{H_0}{q(\log N)^{A+6}} 
\end{equation}
for some $\chi\pmod{k!q}$. We can ensure that the modulus $k!q \leq (\log N)^B$, and moreover the lower bound above is $\gg H_0 (\log N)^{-A-B-6}$.  Now we want to apply Lemma~\ref{le:BHP} with $n_0$ in place of $N$, $H_0$ in place of $H$, and $A+B+7$ in place of $A$. Since $R_i \leq 2N^{1/3}$ for each $i$, we can arrange $R_i$ into three factors such that~\eqref{eq:BHP-condition} holds (e.g. take $L$ to be some product of $R_i$ for which $L \in [n_0^{1/9}/4, n_0^{4/9}]$ and then arrange the remianing $R_i$ into products $M_1$ and $M_2$ for which $M_1/M_2$ and $M_2/M_1$ are at most $2N^{1/3}$). The coefficients $a_{m_1}, b_{m_2}$ and $c_l$ in application of Lemma~\ref{le:BHP} are then taken to be convolutions of the corresponding sequences $a^{(i)}_{r_i} \chi(r_i) r_i^{it}$. 

Recalling the special shape of $a^{(j)}_r$, van der Corput exponential sum estimates and the zero-free region for $L(s, \chi)$ imply that \eqref{eq:BHP-supt-cond} holds unless $|t| \leq 2 \frac{n_0}{H_0} (\log N)^{A+B+57}$ and $\chi$ is principal (this follows directly e.g. from~\cite[Lemma 2.7]{MRT19}). Lemma~\ref{le:BHP} is thus applicable if $|t| > 2 \frac{n_0}{H_0} (\log N)^{A+B+57}$. But the conclusion of Lemma~\ref{le:BHP} contradicts with~(\ref{eq:largeaisum}), so we can conclude that
\[ |t| \leq 2\frac{n_0}{H_0}(\log N)^{A+B+57} \leq \frac{N}{H}(\log N)^{4B}. \]
By the definition of $t$, it follows that
\[ |\beta_1'| \leq \frac{(\log N)^{4B}}{H},  \]
and then by~\eqref{eq:alphak'} we get
\[ |\beta_j'| \leq \frac{(\log N)^{4B}}{N^{j-1}H} + \frac{(\log N)^{O(A)}}{H^j} \leq \frac{(\log N)^{O(A)}}{H^j}. \]
Hence 
\[ \|(k!q)\beta_j\| = \|(k!q)\beta_j'\| \leq \frac{(\log N)^{O(A)}}{H^j}.  \]
Finally, using the relation
\[ \alpha_j = \sum_{i=j}^k \binom{i}{j} (N - n_0)^{i-j} \beta_i, \]
one arrives at the desired inequality
\[ \|(k!q)\alpha_j\| \leq \sum_{i=j}^k \binom{i}{j} H^{i-j} \frac{(\log N)^{O(A)}}{H^i} \leq \frac{(\log N)^{O(A)}}{H^j}.   \]
\end{proof}

\begin{proof}[Proof of Theorem~\ref{thm:exp-g-mu}]
The argument is the same as above, except that we start with a variant of Heath-Brown's identity for $\mu$ (which can be obtained from~\cite[Lemma 1]{H-B82} by dividing both sides by $\zeta'(s)$ and comparing coefficients), which leads to
\[ \sum_{N < n \leq N+H} \mu(n) \e(g(n)) = \sum_{j=2}^3 (-1)^{j-1} \binom{3}{j} \sum_{\substack{N < r_2 \cdots r_{2j} \leq N+H \\ i > j \implies r_i \leq (2N)^{1/3}}} \mu(r_{j+1}) \cdots \mu(r_{2j}) \e(g(r_2\cdots r_{2j})).  \]
At the end of the argument, it remains to treat the major arc case when the coefficients from
\[ g(n) = \sum_{j=1}^k \alpha_j (n-N)^j \]
satisfy the conditions
\[ \|q\alpha_j\| \leq \frac{(\log N)^{O_k(A)}}{H^j} \]
for all $1 \leq j \leq k$. After dividing $(N, N+H]$ into subprogressions, this easily follows from known bounds for
\[ \sum_{N < n \leq N + H(\log N)^{-O_k(A)}} \mu(n) \chi(n), \]
where $\chi$ is a Dirichlet character with modulus $\leq (\log N)^{O_k(A)}$ (analogously to primes in short intervals (see e.g. \cite[Section 10.5]{IK04}), this expression can be satisfactorily bounded using zero-density estimates for $L(s, \chi)$ when $\theta > 7/12$).
\end{proof}



\begin{proof}[Proof of Theorem~\ref{thm:exp-alpha}]
If we write
\[ g(n) = \alpha n^k = \sum_{j=1}^k \alpha_j (n-N)^j, \]
then $\alpha_j = \binom{k}{j} N^{k-j}\alpha$. Hence Theorem~\ref{thm:exp-g} implies that there exists a positive integer $q \leq (\log N)^{O_k(A)}$ such that
\begin{equation}\label{eq:alpha-k-thm} 
\left\|q \binom{k}{j} N^{k-j} \alpha \right\| \leq \frac{(\log N)^{O_k(A)}}{H^{j}} 
\end{equation}
for all $1 \leq j \leq k$. Let $q'$ be the least common multiple of $q \binom{k}{j}$ ($1 \leq j \leq k$), so that $q' \leq (\log N)^{O_k(A)}$. We will show by induction on $j$ that
\[ \left\| q'\alpha \right\| \leq \frac{(\log N)^{O_k(A)}}{N^{k-j} H^{j}} \]
for all $1 \leq j \leq k$, and the conclusion follows from the $j=1$ case of this. When $j = k$, the claim follows from~\eqref{eq:alpha-k-thm} with $j=k$. Now let $1 \leq j < k$, and assume that the claim has already been proven for $j+1$. The induction hypothesis implies that
\[ N^{k-j} \|q' \alpha\| \leq (\log N)^{O_k(A)} \frac{N^{k-j}}{N^{k-1-j}H^{j+1}} \leq (\log N)^{O_k(A)} \frac{N}{H^{j+1}} < \frac{1}{2}. \]
Combining this with
\[ \|N^{k-j} q'\alpha\| \leq \frac{(\log N)^{O_k(A)}}{H^{j}} \]
from~\eqref{eq:alpha-k-thm} leads to
\[ N^{k-j} \|q'\alpha\| \leq  \frac{(\log N)^{O_k(A)}}{H^{j}}, \]
which completes the induction step.
\end{proof}


\section{Application to the Waring-Goldbach problem}\label{sec:app}

Now that we are equipped with the exponential sum estimate Theorem~\ref{thm:exp-alpha}, we can deduce Theorem~\ref{thm:waring-goldbach} via the circle method. In this section we sketch this standard deduction. Let $X = (N/s)^{1/k}$, $H = X^{\theta}$, and let
\[ f(\alpha) = \sum_{|n-X| \leq H} \Lambda(n) \e(\alpha n^k). \]
Then the (weighted) number of ways to write 
\[ N = p_1^k + \cdots + p_s^k \]
with $p_1,\cdots,p_s$ primes satisfying $|p_i - X| \leq X^{\theta}$ is
\[ \rho(N) = \int_0^1 f(\alpha)^s \e(-N\alpha) \d\alpha. \]
Set $Q = (\log N)^A$ for a sufficiently large constant $A$. For $1 \leq a \leq q \leq Q$ with $(a,q)=1$, define
\[ \major(q,a) = \left\{\alpha \in [0,1] \colon |q\alpha - a| \leq \frac{Q}{X^{k-1}H} \right\}.  \]
Let $\major$ be the union of all such $\major(q,a)$, and let $\minor$ be the complement $[1/(X^{k-1}H), 1+1/(X^{k-1}H)]\setminus\major$. We caution that our definition of $\major$ here consists only of the genuine major arcs, while the definition of $\major$ in~\cite[Section 2]{WW15} consists also of the wide major arcs. We have $\rho(N) = \rho(N; \major) + \rho(N; \minor)$, where
\[ \rho(N;\major) =  \int_{\major} f(\alpha)^s \e(-N\alpha) \d\alpha, \ \  \rho(N;\minor) = \int_{\minor} f(\alpha)^s \e(-N\alpha) \d\alpha. \]
Theorem~\ref{thm:waring-goldbach} follows once we show that
\[ \rho(N; \major) \gg \frac{H^{s-1}}{X^{k-1}}, \ \ \rho(N; \minor) = o\left(\frac{H^{s-1}}{X^{k-1}}\right). \]

\subsubsection*{Analysis of $\rho(N; \major)$}

The width of our major arc is chosen so that if $\alpha \in \major(q,a)$, then $f(\alpha)$ can be estimated by counting primes in short intervals in residue classes modulo $q$. Since $\theta > 7/12$, we may use Huxley's result on primes in short intervals~\cite{Hux72} to get
\[ f\left(\frac{a}{q} + \beta\right) = \varphi(q)^{-1} S(q,a)  v(\beta) + O\left(\frac{H}{(\log X)^{10}}\right)  \]
for $|\beta| \leq Q/(X^{k-1}H)$, where
\[ S(q,a) = \sum_{\substack{1 \leq b \leq q \\ (b,q) = 1}} \e\left(\frac{ab^k}{q}\right), \]
and 
\[ v(\beta) = k^{-1} \sum_{(X-H)^k \leq m \leq (X+H)^k} m^{-1+1/k} \e(\beta m). \]
From this point on, the standard theory of the major arc contributions in the Waring-Goldbach problem  can be applied to yield the estimate
\[ 
\rho(N;\major) =  \FG(N) \FJ(N) + O\left(\frac{H^{s-1}}{X^{k-1} (\log X)^{10}}\right), 
\]
where $\FG(N)$ is the singular series
\[ \FG(N) = \sum_{q=1}^{\infty} \varphi(q)^{-s} \sum_{\substack{1 \leq a \leq q \\ (a,q)=1}} S(q,a)^s \e(-aN/q), \]
and $\FJ(N)$ is the singular integral
\[ \FJ(N) = \int_0^1 v(\beta)^s \e(-\beta N) \d \beta. \]
See~\cite[Section 2]{WW15} and the references therein. Moreover, under the assumption on $s$ and the congruence condition on $N$, it can be shown that
\[ \FG(N) \asymp 1, \ \ \FJ(N) \asymp \frac{H^{s-1}}{X^{k-1}}. \]
Hence $\rho(N; \major) \gg H^{s-1}/X^{k-1}$ as desired. 

\subsubsection*{Analysis of $\rho(N; \minor)$}

Let $t = k(k+1)/2+1$ and choose $B > 2t/(s-2t)$. Since $A$ can be chosen sufficiently large in terms of $B$, Theorem~\ref{thm:exp-alpha} implies that $|f(\alpha)| \leq H(\log X)^{-B}$ for $\alpha \in \minor$. Thus
\[ \rho(N; \minor) \ll \left(\frac{H}{(\log X)^B}\right)^{s-2t} \int_0^1 |f(\alpha)|^{2t} \d\alpha. \]
It suffices to establish the following mean value estimate:
\[ \int_0^1 |f(\alpha)|^{2t} \ll \frac{H^{2t-1}}{X^{k-1}} (\log X)^{2t}. \]
This is basically~\cite[Proposition 2.2]{WW15}; we just need to apply the Vinogradov mean value theorem without the $X^{\ee}$ loss. Let
\[ F(\alpha) = \sum_{|n-X| \leq H} \e(\alpha n^k). \]
By considering the underlying Diophantine equations, we get
\[ \int_0^1 |f(\alpha)|^{2t} \d\alpha \ll (\log X)^{2t} \int_0^1 |F(\alpha)|^{2t}\d\alpha. \]
An argument of Daemen~\cite{Dae10} (see~\cite[Lemma 3.1]{WW15}) shows that
\[ \int_0^1 |F(\alpha)|^{2t} \d\alpha \ll \frac{H^{k(k+1)/2-1}}{X^{k-1}} J_{t,k}(H), \]
where $J_{t,k}(H)$ is the number of integral solutions to the system of Diophantine equations
\[ x_1^j + \cdots + x_t^j = y_1^j + \cdots + y_t^j, \ \ 1 \leq j \leq k, \]
with $1 \leq x_1,\cdots,x_t,y_1,\cdots,y_t \leq H$. The Vinogradov mean value conjecture (see~\cite[Section 5]{BDG16}) gives that
\[ J_{t,k}(H) \ll H^{2t - k(k+1)/2} \]
for $t > k(k+1)/2$. Combining the inequalities above together, we get
\[ \int_0^1 |f(\alpha)|^{2t} \ll (\log X)^{2t} \frac{H^{2t-1}}{X^{k-1}}. \]
Hence $\rho(N; \minor) = o(H^{s-1}/X^{k-1})$ by our choice of $B$.

\bibliographystyle{plain}
\bibliography{biblio}{}

\begin{thebibliography}{10}

\bibitem{BHP01}
R.~C. Baker, G.~Harman, and J.~Pintz.
\newblock The difference between consecutive primes. {II}.
\newblock {\em Proc. London Math. Soc. (3)}, 83(3):532--562, 2001.

\bibitem{BDG16}
J.~Bourgain, C.~Demeter, and L.~Guth.
\newblock Proof of the main conjecture in {V}inogradov's mean value theorem for
  degrees higher than three.
\newblock {\em Ann. of Math. (2)}, 184(2):633--682, 2016.

\bibitem{Dae10}
D.~Daemen.
\newblock The asymptotic formula for localized solutions in {W}aring's problem
  and approximations to {W}eyl sums.
\newblock {\em Bull. Lond. Math. Soc.}, 42(1):75--82, 2010.

\bibitem{GT10}
B.~Green and T.~Tao.
\newblock Linear equations in primes.
\newblock {\em Ann. of Math. (2)}, 171(3):1753--1850, 2010.

\bibitem{GT12mobius}
B.~Green and T.~Tao.
\newblock The {M}\"{o}bius function is strongly orthogonal to nilsequences.
\newblock {\em Ann. of Math. (2)}, 175(2):541--566, 2012.

\bibitem{GT12}
B.~Green and T.~Tao.
\newblock The quantitative behaviour of polynomial orbits on nilmanifolds.
\newblock {\em Ann. of Math. (2)}, 175(2):465--540, 2012.

\bibitem{Harman07}
G.~Harman.
\newblock {\em Prime-detecting sieves}, volume~33 of {\em London Mathematical
  Society Monographs Series}.
\newblock Princeton University Press, Princeton, NJ, 2007.

\bibitem{H-B82}
D.~R. Heath-Brown.
\newblock Prime numbers in short intervals and a generalized {V}aughan
  identity.
\newblock {\em Canad. J. Math.}, 34(6):1365--1377, 1982.

\bibitem{Hua15}
B.~Huang.
\newblock Strong orthogonality between the {M}\"{o}bius function and nonlinear
  exponential functions in short intervals.
\newblock {\em Int. Math. Res. Not. IMRN}, (23):12713--12736, 2015.

\bibitem{Hua16}
B.~Huang.
\newblock Exponential sums over primes in short intervals and an application to
  the {W}aring-{G}oldbach problem.
\newblock {\em Mathematika}, 62(2):508--523, 2016.

\bibitem{Hux72}
M.~N. Huxley.
\newblock On the difference between consecutive primes.
\newblock {\em Invent. Math.}, 15:164--170, 1972.

\bibitem{IK04}
H.~Iwaniec and E.~Kowalski.
\newblock {\em Analytic number theory}, volume~53 of {\em American Mathematical
  Society Colloquium Publications}.
\newblock American Mathematical Society, Providence, RI, 2004.

\bibitem{MMS17}
K.~Matom\"{a}ki, J.~Maynard, and X.~Shao.
\newblock Vinogradov's theorem with almost equal summands.
\newblock {\em Proc. Lond. Math. Soc. (3)}, 115(2):323--347, 2017.

\bibitem{MR16}
K.~Matom\"{a}ki and M.~Radziwi\l\l.
\newblock Multiplicative functions in short intervals.
\newblock {\em Ann. of Math. (2)}, 183(3):1015--1056, 2016.

\bibitem{MRT19}
K.~Matom{\"a}ki, M.~Radziwi{\l}{\l}, and T.~Tao.
\newblock Correlations of the von {M}angoldt and higher divisor functions {I}.
  {L}ong shift ranges.
\newblock {\em Proc. Lond. Math. Soc. (3)}, 118:284--350, 2019.

\bibitem{MRT18}
K.~Matom{\"a}ki, M.~Radziwi{\l}{\l}, and T.~Tao.
\newblock Fourier uniformity of bounded multiplicative functions in short
  intervals on average, arXiv:1812.01224.

\bibitem{MontVau07}
H.~L. Montgomery and R.~C. Vaughan.
\newblock {\em Multiplicative number theory. {I}. {C}lassical theory},
  volume~97 of {\em Cambridge Studies in Advanced Mathematics}.
\newblock Cambridge University Press, Cambridge, 2007.

\bibitem{Salmensuu19}
J.~Salmensuu.
\newblock On the {W}aring-{G}oldbach problem with almost equal summands,
  arXiv:1903.01824.

\bibitem{Shiu80}
P.~Shiu.
\newblock A {B}run-{T}itchmarsh theorem for multiplicative functions.
\newblock {\em J. Reine Angew. Math.}, 313:161--170, 1980.

\bibitem{WW15}
B.~Wei and T.~D. Wooley.
\newblock On sums of powers of almost equal primes.
\newblock {\em Proc. Lond. Math. Soc. (3)}, 111(5):1130--1162, 2015.

\bibitem{Zha91}
T.~Zhan.
\newblock On the representation of large odd integer as a sum of three almost
  equal primes.
\newblock {\em Acta Math. Sinica (N.S.)}, 7(3):259--272, 1991.
\newblock A Chinese summary appears in Acta Math. Sinica {{\bf{3}}5} (1992),
  no. 4, 575.

\end{thebibliography}

\end{document}